\numberwithin{equation}{section}
\newcommand{\E}{\mathbb{E}}
\newcommand{\N}{\mathbb{N}}
\newcommand{\Q}{\mathbb{Q}}
\newcommand{\R}{\mathbb{R}}
\newcommand{\Indi}[1]{\mathbbm{1}_{#1}}
\newcounter{dummy} \numberwithin{dummy}{section}
\newtheorem{Proposition}[dummy]{Proposition}
\newtheoremstyle{DefinitionStyle}  % name of the style to be used
	{9pt}				% measure of space to leave above the theorem. E.g.: 3pt
	{9pt}				% measure of space to leave below the theorem. E.g.: 3pt
	{}					% name of font to use in the body of the theorem
	{0pt}				% measure of space to indent
	{\bfseries}		    % name of head font
	{.}					% punctuation between head and body
	{3pt}				% space after theorem head
	{}					% Manually specify head
\theoremstyle{DefinitionStyle}
\newtheorem{lemma}{Lemma}
\newtheorem{theorem}{Theorem}
\newtheorem{remark}{Remark}
\def\1{{\rm l}\hskip -0.21truecm 1}
\newcommand{\vertiii}[1]{{\left\vert\kern-0.25ex\left\vert\kern-0.25ex\left\vert #1 
    \right\vert\kern-0.25ex\right\vert\kern-0.25ex\right\vert}}
\def\R{\mathbb{R}}
\def\Var{\mathbb{V}\mathrm{ar}}
\def\dh2l{\mathbf{d}_{\mathbb{H}_{2\ell}}}
\def\d2{\mathbf{d}_2}
\begin{document}

\title[Asymptotics of particle systems]{Asymptotics for additive functionals of particle systems via Stein's method}
\author{Arturo Jaramillo and Antonio Murillo-Salas}
\address{Arturo Jaramillo: Centro de Investigaci\'on en Matem\'aticas, Jalisco S/N, Col. Valenciana 36023 Guanajuato, Gto.}
\email{jagil@cimat.mx}

\address{Antonio Murillo-Salas: Universidad de Guanajuato, Guanajuato, Mexico.}
\email{amurillos@ugto.mx}
\date{\today}

\maketitle

\begin{abstract}
We consider additive functionals of systems of random measures whose initial configuration is given by a Poisson point process, and whose individual components evolve according to arbitrary Markovian or non-Markovian measure-valued dynamics, with no structural assumptions beyond basic moment bounds. In this setting and under adequate conditions, we establish a general third moment theorem for the normalized functionals. Building on this result, we obtain the first quantitative bounds in the Wasserstein distance for a variety of moving-measure models initialized by Poisson-driven clouds of points, turning qualitative central limit theorems into explicit rates of convergence. The scope of the approach is then demonstrated through several examples, including systems driven by fractional Brownian motion, $\alpha$-stable processes, uniformly elliptic diffusions, and spectral empirical measures arising from Dyson Brownian motion, all under broad assumptions on the control measure of the initial Poisson configuration. The analysis relies on a combination of Stein's method with Mecke's formula, in the spirit of the Poisson Malliavin-Stein methodology.\\

\noindent{\bf MSC 2020:} 	60G55,60G22, 60G52, 60B10.	

\noindent{\bf Key words and phrases:} {\it particle systems, occupation-time fluctuations, limit theorems, Stein's method.}
\end{abstract}

\section{Introduction}\label{sec:intro}
Understanding the behavior of time-evolving systems of random measures plays a central role in modern probability. Systems of this sort arise in many areas; for instance, in statistical physics through measure-valued limits \cite{Dawson1993,Spohn2012}, in population dynamics via superprocesses \cite{Etheridge2000,Perkins2002}, and in random matrix theory through Dyson's interacting-eigenvalue model \cite{AndersonGuionnetZeitouni2010}. This paper contributes to this line of work by considering systems whose initial state is randomized through a Poisson point process. A natural way to build such systems is to start from a Poisson point process that fixes the initial configuration and then develop an adequately chosen stochastic measure evolution (see Section \ref{sec:modeldesc} for details). General references for constructions of this type appear in the theory of random measures \cite{Kallenberg2017} and in the classical framework of point processes \cite{DVJ2008}. This perspective is consistent with the general framework of measure-valued processes developed in \cite{Dawson1993} and with the treatment of superprocesses and measure-valued diffusions in \cite{Perkins2002}.\\

We focus exclusively on the additive functionals generated by this Poisson-driven evolution, a class of objects that fits naturally within the classical framework developed in \cite{Dynkin1991,Dawson1993}. These functionals capture global quantities of interest, such as occupation times, empirical measures, or aggregated interactions across the system, with related analyses carried out in \cite{C-G,D-W,BGT-I,BGT-II,LM-MS-RG}.\\

\noindent
An archetypal version of the model mentioned above can be formulated as follows. Let $\mathbb{X}$ be a Poisson random configuration, which we regard as the random parameter specifying the initial condition of a time-dependent system of measures.  Fix a Radon measure $\nu$ on $\mathbb{X}$, and let $\eta$ be a Poisson random measure with intensity $\nu$. For each point $x \in \mathbb{X}$, we associate a measure-valued process $(\mu_t^{x})_{t \ge 0}$ on a second measurable space $\mathbb{Y}$, with the processes evolving independently over time, such evolution can be quite general as shown in Sections \ref{sec:particle:systems} and \ref{sec:spectralempirical}. Given a bounded test function $\psi$ on $\mathbb{Y}$ with compact support, we  consider the additive functional
\[
A_r[\psi]
   := 
   \int_0^r \int_{\mathbb{X}} \psi(y)\,\mu_t^{x}(dy)\,\eta(dx)\,dt,
\]
which aggregates the contributions of all the particles along the time interval $[0,r]$.  Our goal is to understand the asymptotic behavior of the normalized sequence
\begin{align}\label{eq:Zndef}
Z_r[\psi] := \frac{1}{\sigma_r} \left(A_r[\psi] - \mathbb{E}[A_r[\psi]]\right),	
\end{align}
where \(\sigma_r\) denotes the standard deviation of \(A_r[\psi]\). We are particularly interested in developing a general framework that both captures the emergence of Gaussian fluctuations for $Z_r[\psi]$ and provides quantitative control over their convergence, with enough flexibility to encompass a broad range of underlying dynamics.\\

\noindent The main contributions of this work are threefold. First, we establish a general third moment theorem for the functional $Z_r[\psi]$, demonstrating that the underlying Poissonian randomness drives the Gaussian fluctuations. Our quantitative bounds are consistent with the fourth moment framework previously developed in Gaussian~\cite{nourdin2009stein,nourdin2012normal,nualart2005central,neufcourt2016third} and Poissonian settings~\cite{dobler2018fourth}, but reveal a less common phenomenon: Gaussian limits can be characterized through control of moments up to order three, rather than four. We show that our third moment theorem opens the way to extending classical analyses of occupation time fluctuations, previously carried out in Poissonian and branching systems using tools such as martingale decompositions, Laplace and Feynman-Kac representations, and Fourier analytic methods in space and time, among others (see \cite{C-G,D-W,BGT-I,BGT-II,LM-MS-RG}). Second, we establish explicit quantitative bounds for \eqref{eq:Zndef} in the Wasserstein distance. To the best of our knowledge, such quantitative normal approximations have not been obtained before in this setting. Third, we illustrate the scope of our framework through examples of measure-valued processes and their additive functionals that go beyond the classical occupation time fluctuation setting, as studied for instance in \cite{C-G,D-W,BGT-I,BGT-II}. These include systems arising from processes with long-range dependence such as fractional Brownian motion, heavy-tailed dynamics like $\alpha$-stable processes, general diffusions with nontrivial drift and volatility, and spectral measures arising in random matrix theory, with Dyson Brownian motion serving as a first illustrative example.\\

\noindent
Our methodology combines Stein's method,  Mecke's formula, together with a careful decomposition of the different sources of randomness in the model. The procedure can be summarized as follows. (i) The first step is to rewrite the system so that one can clearly see which part of the randomness comes from the initial Poisson configuration and which part stems from the measure-valued dynamics. (ii) Once this distinction is in place, we implement Stein's method, reducing the problem to bounding the expectation of the Stein operator applied to a suitable test function, and evaluated in the normalized random variables under consideration. (iii) To control this former term, we apply an integration by parts formula to the Poissonian component of the system by means of an application of  Mecke's formula. This yields a bound for the quantity of interest that depends only on the deviation away from zero for the third moment of the normalized functional. (iv) The remaining task is to obtain sharp bounds on the third moment of the normalized functional. This is achieved through explicit computations that depend on suitable control of the conditional densities, up to order three, of the processes driving the non-Poissonian dynamics. In several examples, these computations lead to expressions closely related to the variance estimates for occupation times of particle systems driven by $\alpha$-stable processes, as studied by Bojdecki, Gorostiza, and Talarczyk in \cite{BGT-I,BGT-II}. These Markovian estimates play a crucial role in making our bounds explicit, and their analytical structure remains effective even when the underlying driving processes cease to be Markovian.\\

\noindent While the methodology can accommodate more intricate dynamics, including systems with branching mechanisms, in this paper we restrict attention to non-branching measure evolutions. This particular setting already yields nontrivial fluctuation results and illustrates the scope of our approach.  Before turning to the specifics of our main results, we first lay out the structure of the model and fix some minimal notation that will be used throughout. At this stage we restrict ourselves to describing the Poissonian input, the associated evolving measures, and the additive functionals of interest. Broader preliminaries and auxiliary tools will be introduced later, once they are actually needed for the analysis.\\

\noindent The remainder of the paper is organized as follows.  In Section \ref{sec:modeldesc}, we introduce the  framework for our model, separating the randomness of the initial Poisson configuration from that of the  measure-valued evolutions. We then give the precise definition of the additive functionals under consideration and present several fundamental examples.  Section \ref{se:Preliminariessec} presents the main preliminaries, including the elements of Stein's method, as well as  Mecke's formula.  In Section \ref{sec:abstractmomenthm}, we establish our abstract moment theorems, deriving quantitative Gaussian approximation bounds in terms of third moments (and, in a more general formulation, fourth moments).  Section \ref{sec:particle:systems} contains the first set of applications, treating Poissonian particle systems in a broad range of regimes, including $\alpha$-stable motions, uniformly elliptic diffusions, and non-Markovian models such as particles driven by fractional Brownian paths, and deriving explicit Wasserstein rates in each case. 
Section \ref{sec:spectralempirical} applies our methodology to the analysis of the spectral empirical measures generated by the Dyson Brownian motion. In Section \ref{sec:secondhtoemre} we present the proof of the main application theorem. Finally, in the Appendix  we prove the auxiliary variance estimates and density bounds used throughout the paper.

\section{Model description}\label{sec:modeldesc}
Our construction proceeds in three steps. First, we describe the Poissonian random parameter space that encodes the initial particle configuration of the system. Next, we specify the family of measure-valued processes that evolve from this random input. Finally, we define the additive functionals that will serve as the main objects of our study.

\subsection{Poisson random parameter space} Let \((\mathbb{X}, \mathcal{X})\) be a measurable space representing the domain in which the Poisson configuration points are sampled. Denote by \(\mathbf{N}_{\sigma}(\mathbb{X})\) the space of positive integer-valued, Radon measures on \(\mathbb{X}\), equipped with the smallest \(\sigma\)-algebra \(\mathcal{N}_{\sigma}(\mathbb{X})\) that makes the mappings \(\chi \mapsto \chi(B)\) measurable for every \(B\) in $\mathcal{X}$. Elements of \(\mathbf{N}_{\sigma}(\mathbb{X})\) will play the role of stochastic parameters that encode the initial particle configurations of the system.\\

\noindent To introduce randomness into these configurations, we fix a Radon control measure \(\nu\) on \((\mathbb{X}, \mathcal{X})\), and define a probability measure \(\vartheta\) on \(\mathcal{N}_{\sigma}(\mathbb{X})\) such that a random measure \(\eta\) is distributed according to \(\vartheta\) if, for any disjoint finite collection \(B_1, \dots, B_n \) in $\mathcal{X}$, the random variables \(\eta(B_1), \dots, \eta(B_n)\) are, independent and each \(\eta(B_i)\) follows a Poisson distribution with mean \(\nu(B_i)\). In other words, \(\eta\) is a Poisson random measure in \( \mathbb{X}\) with intensity \(\nu\). The measure $\nu$ is assumed to be absolutely continuous with respect to the Lebesgue measure with a density bounded from above and away form zero.\\

\subsection{Measure valued processes} Having described the parameter process, we now specify the evolution of the system. Consider an additional measurable space \((\mathbb{Y}, \mathcal{Y})\), which will serve as the ambient space (or state space) for the system, and let \(\mathcal{M}_{\sigma}(\mathbb{Y})\) denote the space of \(\sigma\)-finite measures on \((\mathbb{Y}, \mathcal{Y})\). Let \(\{\mu^x \; ; \; x \in \mathbb{X}\}\) be a family of independent \(\mathcal{M}_{\sigma}(\mathbb{Y})\)-valued processes, defined on a probability space \((\Omega, \mathcal{F}, \mathbb{Q})\), each of the form \(\mu^x = \{\mu_t^x \; ; \; t \geq 0\}\). Given an initial configuration \(\eta\) in \(\mathcal{N}_{\sigma}(\mathbb{X})\), we define the induced \(\mathcal{M}_{\sigma}(\mathbb{Y})\)-valued process \(\mu^{\eta} = \{\mu_t^{\eta} \; ; \; t \geq 0\}\) by
\[
\mu_t^{\eta}(\omega) := \int_{\mathbb{X}} \mu_t^x(\omega) \, \eta(dx),  \,\, \omega \in\Omega.
\]
 For each fixed \(x\) in $ \mathbb{X}$, the measure \(\mu_t^x(\omega)\) will be viewed as a function \(A \mapsto \mu_t^x(\omega, A)\) on \(\mathcal{Y}\). The process of interest is the \(\mathcal{M}_{\sigma}(\mathbb{Y})\)-valued process $\mu:=\{\mu_t^\eta(\omega)\,:\, (\eta,\omega)\in\tilde{\Omega}, t\geq0\}$ where the product space \(\tilde{\Omega} := \mathcal{N}_{\sigma}(\mathbb{X}) \times \Omega\) is equipped with the product \(\sigma\)-algebra \(\mathcal{N}_{\sigma}(\mathbb{X}) \otimes \mathcal{F}\) and the product measure \(\mathbb{P} := \vartheta \otimes \mathbb{Q}\). \\

\subsection{Additive functionals}  We now define the additive functionals of interest. For a bounded test function \(\psi: \mathbb{Y} \to \mathbb{R}\) with compact support and $r\in(0,\infty)$, we define the functional \(A_r[\psi]\) over $\tilde{\Omega}$, as
\begin{align}\label{eq:Ardef}
A_r[\psi](\eta, \omega) := \int_0^r \langle \mu_t^{\eta}(\omega), \psi \rangle \, dt, 
\end{align}
where we have used the dual pairing notation 
\begin{align*}
\langle \rho, \psi \rangle
  &:= \int_{\mathbb{X}} \psi(x) \, \rho(dx),\,\,\rho\in\mathcal{N}_{\sigma}(\mathbb{X}) .
\end{align*}
The quantity \(A_r[\psi]\) encodes the cumulative interaction of the evolving system through the lens of the spatial filter \(\psi\), up to time \(r\). Our goal is to study the distributional behavior of the normalized sequence
\begin{equation} \label{mainZ}
Z_r[\psi] := \frac{1}{\sigma_r} \left(A_r[\psi] - \mathbb{E}[A_r[\psi]]\right),
\end{equation}
where \(\sigma_r\) denotes the standard deviation of \(A_r[\psi]\) and $\E$ denotes the expectation with respect to the probability measure $\mathbb{P}$.\\

\begin{comment}
\subsection{Fundamental examples} Although the notation introduced here may appear cumbersome, it offers the advantage of encompassing a broad class of stochastic models, including interacting particle systems, branching processes, spectrum of random matrix models, Markov diffusions, among others. We anticipate to the reader that, while we describe the basic structure shared by such models and present a universal methodology potentially applicable to all of them, the scope of this paper remains introductory. The path towards treating other examples (such as examples exhibiting ramification) is conceptually clear, but demands a substantial technical effort that remains to be explored.\\ 
\end{comment}

\subsection{Fundamental examples}
Although the notation may seem cumbersome at first, it is sufficiently flexible to encompass a wide variety of stochastic models, including interacting particle systems, branching processes, random matrix spectra, and Markov diffusions. The present work, however, is intended as an introduction to the general structure underlying these examples. Extensions to more elaborate settings, such as models with ramification, are conceptually accessible but require substantial additional analysis, which we do not pursue here. Next we present some examples of models arising from specific choices of the configuration $\eta$ and the evolving measures $\mu^{\eta}$.\\

\noindent\textit{Dirac particle systems}\\
Take $\mathbb{X}=\R^{d}$ and $\mathbb{Y}=\mathbb{X}^{d}$. Consider a collection $\mathbf{X}=\{\mathbf{X}^{x}\ ;\ x\in\mathbb{X}\}$ of independent processes over $\mathbb{Y}$ of the form $\mathbf{X}^{x}=\{\mathbf{X}_t^{x}\ ;\ t\geq 0\}$, with $\mathbf{X}_t^{x}=({X}_t^{1,x},\dots, {X}_t^{m,x})$. We will assume that $X_0^{i,x}=x\in\R^d$. Then, we define 
\begin{align}\label{eq:mutdefsumdeltas}
\mu_t^{x}
  &:=a_1\delta_{X_t^{1,x}}+\cdots+a_m\delta_{X_t^{m,x}},
\end{align}
for a given choice of strictly positive coefficients $a_1,\dots, a_m$. The analysis of these processes will be carried out in detail in Section \ref{sec:particle:systems}. The dynamics of $\mathbf{X}^x$, can be quite general. For instance, they may arise from an interacting particle system governed by the stochastic differential equation  
\[
d\mathbf{X}_{t}^{x}
  = U(\mathbf{X}_{t}^{x})\,dt + V(\mathbf{X}_{t}^{x})\,d\mathbf{W}_{t},
\]
where $\mathbf{W}_{t}$ is an $\ell$-dimensional Brownian motion on $\mathbb{R}^{\ell}$,  and the coefficients $U$ and $V$ are functions from $\mathbb{R}^{md}$ to $\mathbb{R}^{md}$ and $\mathbb{R}^{\ell \times md}$, respectively. 
Beyond this Markovian setting, the framework also allows for non-Markovian dynamics, such as fractional Brownian motion or $\alpha$-stable processes on $\mathbb{R}^{md}$. It is worth mentioning that the spherically symmetric $\alpha$-stable motion case where $m=1$ and $a_1=1$ was treated in \cite{BGT-I} and \cite{BGT-II}, where functional central limit theorems for this kind of objects where obtained by means of an analysis of the moments. Different scenarios are considered in Section \ref{sec:particle:systems}.\\

%\noindent Our next example concerns the analysis of random matrix processes, and will be presented in detail in Section \ref{sec:spectralempirical}.\\ 

\noindent\textit{Spectral empirical distributions}\\
Take $\mathbb{X}$ as the set of symmetric real valued matrices of dimension $d$ and $\mathbb{Y}=\R$. We let $\{\mathbf{Y}^x\ ;\ x\in\mathbb{X}\}$ be a collection of independent matrix valued processes. Consider the $\R^d$-valued process of eigenvalues $(\lambda_t^{1,x},\dots, \lambda_t^{d,x})$. We then associate the empirical distribution
$$\mu_t^{x}:=a_1\delta_{\lambda_t^{1,x}}+\cdots+a_d\delta_{\lambda_t^{d,x}}.$$ 
This model will we treated in detail in Section \ref{sec:spectralempirical}.

%\noindent We conclude with an example that falls outside the scope of the present analysis but fits naturally within the general framework.\\

\noindent\textit{Critical branching particle systems}\\
Take $\mathbb{X}=\mathbb{Y}=\R^{d}$. Consider a collection $\{X^{x}\ ;\ x\in\mathbb{X}\}$ of independent Markov branching processes over $\mathbb{Y}$ of the form $X^{x}=\{X_t^{x}\ ;\ t\geq 0\}$. More precisely, $X_t^{x}$ follows a Markov movement for an exponential time, time at which the particle dies and undergoes a branching procedure, giving rise to a mean-one random number of offspring, placed at its parent position, each of which evolves identically to its progenitor particle. This example falls outside the scope of the present analysis but fits naturally within the general framework presented in this work.\\

\section{Preliminaries}\label{se:Preliminariessec}
In this section, we present some preliminaries on Stein's method and analysis of Poisson functionals, which will be used throughout the paper. In what follows, for every measurable space $(\mathbb{Y},\mathcal{Y})$, we denote by $\mathcal{P}(\mathbb{Y})$ the set of probability measures on $\mathbb{Y}$. In the particular case where $\mathbb{Y} = \mathbb{R}$, we denote by $\gamma$ the standard Gaussian distribution.

\subsection{Stein's method}\label{sec:Stein-intro}
Our quantitative Gaussian approximation results for $Z_r[\psi]$ will be derived using the celebrated Stein's method. At its core, the method relies on the fact that a probability measure $\rho$ on $\R$ is standard normal exactly when
\[
\int_{\R} \mathcal{A}[g](x)\,\rho(dx)=0,
\]
for every differentiable test function $g$ with bounded derivative, where the Stein operator $\mathcal{A}$ is given by
\[
\mathcal{A}[f](x) := f'(x) - x f(x).
\]
Given a real test function $h$ on $\R$, we define the associated solution to the Stein equation as the unique bounded solution to 
\begin{equation}\label{St-eq}
\mathcal{A}[f_h](x)= h(x) - \int_{\R} h(x)\,\gamma(dx).
\end{equation}
Whenever $h$ is Lipschitz, this equation admits a solution  with suitable regularity properties. The key idea is that if a random variable $W$ is close in distribution to a standard Gaussian random variable, then the expectation
\begin{align}\label{eq:Steinopdef}
|\mathbb{E}[f_h'(W) - W f_h(W)]|	
\end{align}
should be small. This quantity serves as a tractable proxy for the difference between $\mathbb{E}[h(W)]$ and $\mathbb{E}[h(Z)]$. To control this difference, one typically solves the Stein equation for a given test function $h$, derives bounds on the solution $f_h$ and its derivatives, and estimates the expectation $|\mathbb{E}[\mathcal{A}[f_h](W)]|$ using known properties of $W$. For more details on the Stein method, the reader can see \cite{nourdin2012normal}.\\

\noindent As previously mentioned, the test functions we will consider for $h$ are the one-Lipschitz functions, denoted by $\mbox{Lip}_1$. In this case, it can be shown that the corresponding solution $f_h$ belongs to the class $\mathcal{C}$ of real functions $g$ over $\R$, satisfying $\|g\|_{\infty} \leq \sqrt{\pi/2}$, $\|g'\|_{\infty} \leq 1$, and $\|g''\|_{\infty} \leq 2$. By letting $h$ range over all functions in $\mbox{Lip}_1$, we recover the one-Wasserstein distance $d_{W}$ defined by
\begin{align}\label{eq:dWdistance}
d_{W}(\rho_1,\rho_2)
  &:= \sup_{h \in \text{Lip}_1} \left| \int_{\R} h(x)\, \rho_1(dx) - \int_{\R} h(x)\, \rho_2(dx) \right|,\,\, \rho_1,\rho_2\in\mathcal{P}(\R).
\end{align}
 Integrating the Stein equation with respect to $\rho$ yields the inequality
\begin{align}\label{eq:Steinineq}
d_{W}(\rho,\gamma)
  &\leq \sup_{f \in \mathcal{C}} \left| \int_{\R} \mathcal{A}[f](x)\, \rho(dx) \right|.
\end{align}
This reasoning reframes the original problem. Rather than dealing directly with the right-hand side of \eqref{eq:dWdistance}, we instead work through its Stein reformulation given by the right-hand side of \eqref{eq:Steinineq}.

\subsection{Mecke's formula}
The right-hand side of \eqref{eq:Steinineq} can be handled using standard tools from the analysis of Poisson point processes, most notably Mecke's formula, which we review next. In preparation for this, we recall that the main random objects in our setting are real-valued functions defined on the space $\mathbf{N}_{\sigma}(\mathbb{X})$.  Consider a function $h : \mathbf{N}_{\sigma}(\mathbb{X}) \times \mathbb{X} \to \mathbb{R}$, integrable with respect to $\vartheta \otimes \nu$. Mecke's formula states that
\begin{align}\label{eq:Mecke}
\int_{\mathbf{N}_{\sigma}} \int_{\mathbb{X}} h(\eta, x)\, \eta(dx)\, \vartheta(d\eta)
  = \int_{\mathbf{N}_{\sigma}} \int_{\mathbb{X}} h(\eta + \delta_x, x)\, \nu(dx)\, \vartheta(d\eta).
\end{align}
This identity will allow us to handle expressions involving the term $\mathbb{E}[W f_h(W)]$ appearing in \eqref{eq:Steinopdef}, for the type of random variables that will be relevant to our study. The reader is referred to \cite{PeccatiReitzner2016} for a comprehensive presentation of applications of Mecke's formula, and to \cite{Mecke1967,LastPenrose2017} for rigorous proofs of the identity in its classical and modern formulations.

\section{Abstract Moment Theorems}\label{sec:abstractmomenthm}
\noindent In this section, we present our main abstract result, which will be the basis for proving central limit theorems for the models described in Section \ref{sec:modeldesc}. The result holds under general assumptions on the particle dynamics, allowing for a flexible choice in the evolution of the measures. In the sequel, $\rho_r$ will denote the probability distribution of the random variable $Z_r[\psi]$ defined in \eqref{mainZ}. Given a nonnegative integer $\kappa$, we will denote by $\mathcal{P}_{\kappa}(\R)$ the set of probability measures on $\R$ with finite moments up to order $\kappa$. For any $\tau$ in $ \mathcal{P}_{\kappa}(\R)$ and $1 \leq j \leq \kappa$, we denote by $m_j[\tau]$ the moment of order $j$ of $\tau$, namely,
\begin{align*}
m_{j}[\tau]
  := \int_{\R} x^{j}\, \tau(dx).
\end{align*}
As anticipated in the introduction, our main abstract result is formulated within the framework of the so-called fourth-moment phenomenology. First, we take a moment to briefly outline this setting.  This line of research originated in the context of normal approximations for sequences of functionals on the Wiener space, where it was shown that convergence in distribution towards the standard Gaussian law within a fixed Wiener chaos, is essentially governed by the convergence of the fourth moment to that of the standard Gaussian distribution. This principle was first made quantitative in \cite{nourdin2009stein}, building upon the seminal work by Nualart and Peccati \cite{nualart2005central}, and later developed systematically across a remarkably broad array of frameworks. We refer the reader to the monograph \cite{nourdin2012normal} for an accessible and comprehensive exposition of these ideas.\\

\noindent Subsequent extensions of the fourth moment paradigm include its non-commutative analogue in free probability \cite{kemp2012wigner}, its adaptation to the setting of infinitely divisible distributions \cite{arizmendi2021convergence}, and its counterpart in the Poisson space \cite{dobler2018fourth}. In essence, the phenomenology consists of considering a class $\mathcal{Q}$ of probability measures on $\mathbb{R}$ and proving that for every element in this class, a suitable probabilistic distance to the Gaussian (for instance, the Wasserstein distance) can be bounded by a constant times the square root of the difference between its fourth moment and three. Surprisingly, there are virtually no known examples in which moments of lower order govern convergence in this fashion. To our knowledge, the only example of a third moment phenomenology appears in \cite{neufcourt2016third}. In this regard, Theorem \ref{thm:mainabstractone} below is especially noteworthy, as it provides not only a novel instance of fourth moment phenomenology but also an example where a third moment bound is available.

\begin{theorem}\label{thm:mainabstractone}
%Suppose that
%\begin{align*}
%\int_{\R^{d}}\E[|u_r^{\psi}(x)|^3]\nu(dx)<\infty.
%\end{align*}
%Then it holds that 
%\begin{align}\label{eq:dWmainzero}
%d_{W}(\rho_r,\gamma)
%  \leq 	\sqrt{2}\int_{\R^{d}}\E[|u_r^{\psi}(x)|^3]\nu(dx).
%\end{align}
%Moreover, under the condition $\psi\geq 0$,  the third moment theorem 
%\begin{align}\label{eq:dWmain}
%d_{W}(\rho_r,\gamma)
%  \leq 	\sqrt{2}m_{3}[\rho_r]
%\end{align}
%holds. 

We have  that  \begin{align}\label{eq:dWmainzero}
d_{W}(\rho_r,\gamma)
  \leq 	\frac{\sqrt{2}}{\sigma_r^3}\int_{\R^d}\E\left[\left|\int_0^r\langle\mu^x_s,\psi\rangle ds\right|^3\right] \nu(dx).
\end{align}
Moreover, under the condition $\psi\geq 0$,  the following third moment theorem holds
\begin{align}\label{eq:dWmain}
d_{W}(\rho_r,\gamma)
  \leq 	\sqrt{2}m_{3}[\rho_r].
\end{align}
\end{theorem}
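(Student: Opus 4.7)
The plan is to combine the Stein inequality~\eqref{eq:Steinineq} with Mecke's formula~\eqref{eq:Mecke}, exploiting the independence between the Poissonian randomness carried by $\eta$ and the dynamical randomness carried by $\{\mu^x\}_{x\in\mathbb{X}}$. Throughout, I use the shorthand $F_x(\omega) := \int_0^r \langle \mu^x_t(\omega),\psi\rangle\,dt$, so that $A_r[\psi] = \int_{\mathbb{X}} F_x\,\eta(dx)$ and $W := Z_r[\psi] = (A_r[\psi] - \E[A_r[\psi]])/\sigma_r$. By~\eqref{eq:Steinineq}, it is enough to bound $|\E[f'(W) - W f(W)]|$ uniformly over $f \in \mathcal{C}$.

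The heart of the argument is an integration by parts on the Poisson layer. I would start from $\sigma_r \E[W f(W)] = \E[A_r[\psi]\, f(W)] - \E[A_r[\psi]]\E[f(W)]$ and apply Mecke to the first summand with $\omega$ held fixed. Since adding $\delta_x$ to $\eta$ shifts $A_r[\psi]$ by $F_x$, I obtain $\E[A_r[\psi]f(W)] = \int \E[F_x\, f(W + F_x/\sigma_r)]\,\nu(dx)$. The crucial observation is that, because $\nu$ is absolutely continuous, for $\nu$-a.e.\,$x$ one has $x\notin\mathrm{supp}(\eta)$ almost surely, whence $F_x$ (a function of $\mu^x$ only) and $W$ (a function of $\eta$ and of $\{\mu^y : y \in \mathrm{supp}(\eta)\}$) are independent. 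This permits the replacement $\E[F_x]\E[f(W)] = \E[F_x f(W)]$ inside the integral, producing
\[
\sigma_r \E[W f(W)] = \int_{\mathbb{X}} \E\bigl[F_x\bigl(f(W + F_x/\sigma_r) - f(W)\bigr)\bigr]\,\nu(dx).
\]

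A second-order Taylor expansion then splits the integrand into a first-order piece and a remainder. The first-order piece contributes $(1/\sigma_r)\E[f'(W)]\int \E[F_x^2]\,\nu(dx)$, and, thanks to the companion variance identity $\sigma_r^2 = \int \E[F_x^2]\,\nu(dx)$ --- itself a single further application of Mecke to $\E[A_r[\psi]^2]$ combined with the same independence --- this collapses exactly to $\sigma_r \E[f'(W)]$, producing the desired cancellation. Only the Taylor remainder survives, and its absolute value is controlled by $(\|f''\|_\infty / 2\sigma_r^3)\int \E[|F_x|^3]\,\nu(dx)$; inserting $\|f''\|_\infty \leq 2$ on $\mathcal{C}$ yields~\eqref{eq:dWmainzero}.

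To pass from~\eqref{eq:dWmainzero} to~\eqref{eq:dWmain}, I would rerun the same Mecke-plus-independence machinery on $\E[\tilde A_r^3]$, where $\tilde A_r := A_r[\psi] - \E[A_r[\psi]]$. Writing this as $\E[\tilde A_r^2 A_r[\psi]] - \E[A_r[\psi]]\sigma_r^2$ and expanding $(\tilde A_r + F_x)^2$ after Mecke, the two cross-terms vanish because $\E[\tilde A_r] = 0$ and $\int \E[F_x]\E[\tilde A_r^2]\,\nu(dx) = \E[A_r[\psi]]\sigma_r^2$, leaving the clean identity $\sigma_r^3 m_3[\rho_r] = \int \E[F_x^3]\,\nu(dx)$. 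Under $\psi \geq 0$ one has $F_x \geq 0$, so $\E[F_x^3] = \E[|F_x|^3]$, and substitution in~\eqref{eq:dWmainzero} delivers~\eqref{eq:dWmain}. The main obstacle throughout is the rigorous decoupling of the two sources of randomness required to obtain the independence of $F_x$ and $W$ for $\nu$-a.e.~$x$; once this is secured, the rest amounts to routine Mecke bookkeeping and Taylor estimation.
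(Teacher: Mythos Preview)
Your proposal is correct and follows essentially the same route as the paper: Stein's inequality, Mecke's formula, the independence of $F_x$ and $W$ for $\nu$-a.e.\ $x$ (via non-atomicity of $\nu$), Taylor expansion, and the cumulant identity $\sigma_r^3 m_3[\rho_r]=\int \E[F_x^3]\,\nu(dx)$. The only cosmetic difference is that the paper obtains both the variance identity and the third-moment identity by specializing the already-derived relation $\E[Z_r f(Z_r)]=\E[f'(Z_r)]+\mathcal{E}_r[f]$ to $f(x)=x$ and $f(x)=x^2$, whereas you re-derive each by a fresh Mecke computation; the substance is identical.
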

\begin{remark}
We can adapt the arguments of the proof of Theorem \ref{thm:mainabstractone}, in order to show that  under the general condition of $\psi$ not being necessarily positive and assuming the finiteness of the moment of order four of $\rho_r$, it holds that  
\begin{align}\label{eq:dzetamain}
d_{\zeta}(\rho_r,\gamma)
  \leq 	m_{3}[\rho_r]+\frac{1}{6}(m_{4}[\rho_r]-3),
\end{align}
where the distance in the left is defined as
\begin{align*}
d_{\zeta}(\beta_1,\beta_2)
  &:=\sup_{f\in {\mathcal{K}}}|\langle\beta_1,f\rangle-\langle\beta_2,f\rangle|,
\end{align*}
where $\mathcal{K}:=\{f:\R\rightarrow\R\ ;\ |f^{\prime\prime}|,|f^{\prime}|\leq 1\}$.
\end{remark}

\begin{remark}
We emphasize the generality of the motion of the underlying measures: no structural assumptions are imposed on the dynamics of the process. Naturally, the main challenge lies in obtaining sharp estimates for the third and fourth moments of the model. In this context, the inequality \eqref{eq:dWmainzero}  proves to be a particularly effective tool.
\end{remark}

In Section~\ref{sec:particle:systems}, we demonstrate the applicability of inequality \eqref{eq:dWmainzero} in the context of the Dirac particle systems introduced in Section~\ref{sec:intro}, implementing it across a broad range of dynamics for the underlying process $\mathbf{X}$, including $\alpha$-stable processes, fractional Brownian motion, and solutions to elliptic stochastic differential equations driven by a Brownian motion. In Section~\ref{sec:spectralempirical}, we  present  implementations in the framework of the Dyson Brownian motion.

\begin{proof}[Proof of Theorem \ref{thm:mainabstractone}]
By \eqref{eq:Steinineq}, it suffices to bound 
$$\varepsilon_{r}:=|\E[Z_r[\psi]f(Z_r[\psi])-f^{\prime}(Z_r[\psi])]|,$$ 
for $f$ in $\mathcal{C}$. Being a random variable in the probability space $\tilde{\Omega}$, we can regard it as a function of an element $\eta$ in $\mathbf{N}_{\sigma}$ and a point $\omega$ in $\Omega$; which we denote as $Z_r(\eta,\omega)$, omitting the dependence on $\psi$ for convenience. Recall from Section \ref{sec:intro} that $\vartheta$
 denotes the distribution of the Poisson point process component. The quantity of interest can then be written as 
 \begin{align*}
\varepsilon_{r}
    &=\int_{\Omega}\varepsilon_{r}^{N_{\sigma}}(\omega)\Q(d\omega),
\end{align*}
 where 
  \begin{align*}
\varepsilon_{r}^{N_{\sigma}}(\omega)
  &:=\left|\int_{\mathbf{N}_{\sigma}}Z_{r}(\eta,\omega)f(Z_{r}(\eta,\omega))-f^{\prime}(Z_{r}(\eta,\omega))\vartheta(d\eta)\right|.
 \end{align*}
From \eqref{mainZ}  and some elementary computations, for $\omega\in\Omega$, we have 
\begin{multline*}
\int_{\mathbf{N}_{\sigma}}Z_{r}(\eta,\omega)f(Z_{r}(\eta,\omega))\vartheta(d\eta)\\
  = \frac{1}{\sigma_r}\int_0^r\int_{\mathbf{N}_{\sigma}}\int_{\mathbb{X}}\int_{\mathbb{Y}} \psi(y)(f-\E[f(Z_r)])(Z_{r}(\eta,\omega))\mu_t^x(\omega,dy)\eta(dx)\vartheta(d\eta)dt,
\end{multline*}
so that by Mecke's equation \eqref{eq:Mecke}, we deduce that 
\begin{multline*}
\int_{\mathbf{N}_{\sigma}}Z_{r}(\eta,\omega)f(Z_{r}(\eta,\omega))\vartheta(d\eta)\\
\begin{aligned}
  &= \frac{1}{\sigma_r}\int_0^r\int_{\mathbf{N}_{\sigma}}\int_{\mathbb{X}}\int_{\mathbb{Y}} \psi(y)(f-\E[f(Z_r)])(Z_{r}(\eta+\delta_x,\omega))\mu_t^x(\omega,dy)\nu(dx)\vartheta(d\eta)dt.
\end{aligned}
\end{multline*}
Now, integrating with respect to $\mathbb{Q}$ over ${\Omega}$, we get 
\begin{align*}
\E[Z_{r}f(Z_{r})]
  &= \frac{1}{\sigma_r}\int_0^r\int_{\mathbf{N}_{\sigma}}\int_{\mathbb{X}}\Theta_t(x,\eta)\nu(dx)\vartheta(\eta)dt,
\end{align*}
where 
\begin{align*}
\Theta_t(x,\eta)
  &:=\int_{{\Omega}}\int_{\mathbb{Y}} \psi(y)f(Z_{r}(\eta,\omega)+u_r^{\psi}(x,\omega)/\sigma_r)\mu_t^x(\omega,dy)\Q(d \omega)\\
  &-\int_{{\Omega}}\int_{\mathbb{Y}} \psi(y) \E[f(Z_r)]\mu_t^x(\omega,dy)\Q(d \omega),
\end{align*}
 with 
 \begin{align}\label{add-func-one-perticle}
     u_r^\psi(x,\omega):=\int_0^r\langle\mu^x_t(\omega),\psi\rangle dt.
 \end{align}
 Next we show that for every $x$ in $\mathbb{X},$ there exists a set $\mathcal{J}_x$ in $\mathbf{N}_{\sigma}(\mathbb{X})$, such that $\vartheta[\mathcal{J}_x]=1$, satisfying that for all $\eta$ in $\mathcal{J}_x$, the random variable $Z_r(\eta,\omega)$ and the process  $\{\langle \mu_t^x,\psi\rangle\ t\geq 0\}$, regarded as a functions of $\omega$, are stochastically independent with respect to $ \mathbb{Q}$. To verify this, due to the boundedness of the random variables $\langle \mu_t^x,\psi\rangle$ and $Z_r(\eta,\cdot )$, it suffices to determine the behavior of the corresponding mixed moments for an arbitrary finite choice of $t$'s. More precisely, if $t_1,\dots, t_{m}$ are non-negative real numbers and $\alpha_1,\dots, \alpha_m,\beta$ are positive integers, then we are required to show 
\begin{align}\label{eq:Tidentitygoal}
T
  &:=\int_{\mathbf{N}_{\sigma}(\mathbb{X})}\int_{\Omega}\langle \mu_{t_1}^x(\omega),\psi\rangle^{\alpha_1}\cdots \langle \mu_{t_m}^x(\omega),\psi\rangle^{\alpha_m}(\sigma_rZ_r(\eta,\omega))^{\beta}\mathbb{Q}(d\omega)\vartheta(d\eta)\nonumber\\
    &=\left(\int_{\mathbf{N}_{\sigma}(\mathbb{X}}\int_{\Omega}\langle \mu_{t_1}^x(\omega),\psi\rangle^{\alpha_1}\cdots \langle \mu_{t_m}^x(\omega),\psi\rangle^{\alpha_m}\mathbb{Q}(d\omega)\vartheta(d\eta)\right)\nonumber\\
    &\times \left(\int_{\mathbf{N}_{\sigma}(\mathbb{X})}\int_{\Omega}(\sigma_rZ_r(\eta,\omega))^{\beta}\mathbb{Q}(d\omega)\vartheta(d\eta)\right).
\end{align}
To this end, we first define $\mathcal{J}_x$ as the event $\{\eta\in\mathbf{N}_{\sigma}(\mathbb{X})\ ;\ x\notin\text{supp}[\eta]\}$. Since for every $A$ in $\mathcal{X}$, it holds that $\eta\mapsto \eta[A]$ is a Poisson random variable under $\vartheta$ with parameter $\nu[A]$, and since $\nu$ is non-atomic, we conclude that $\eta[\{x\}]$ is equal to zero $\vartheta$-almost everywhere, thus implying the existence of a set $\mathcal{J}_x$, such that $\{\eta\in\mathbf{N}_{\sigma}(\mathbb{X})\ ;\ x\notin\text{supp}[\eta]\}$ for  $\eta$ in $\mathcal{J}_x$. Next, we observe that for $\eta$ in $\mathcal{J}_x$, it holds that $\mu^x$ is independent of $\{\mu^y\ ;\ y\in\text{supp}[\eta]\}$, and consequently due to the condition of independence over the $\mu^z$'s. We thus conclude that  

\begin{multline*}
\int_{\Omega}\langle \mu_{t_1}^x(\omega),\psi\rangle^{\alpha_1}\cdots \langle \mu_{t_m}^x(\omega),\psi\rangle^{\alpha_m}
    \langle \mu_{s_1}^{y_1}(\omega),\psi\rangle\cdots \langle \mu_{s_{\beta}}^{y_{\beta}}(\omega),\psi\rangle \mathbb{Q}(d\omega)\\
\begin{aligned}
  &=\left(\int_{\Omega}\langle \mu_{t_1}^x(\omega),\psi\rangle^{\alpha_1}\cdots \langle \mu_{t_m}^x(\omega),\psi\rangle^{\alpha_m}\mathbb{Q}(d\omega)\right)
    \left(\int_{\Omega}\langle \mu_{s_1}^{y_1}(\omega),\psi\rangle\cdots \langle \mu_{s_{\beta}}^{y_{\beta}}(\omega),\psi\rangle \mathbb{Q}(d\omega)\right).
\end{aligned}
\end{multline*}
Relation \eqref{eq:Tidentitygoal} follows from here. Hence we conclude that $Z_r(\eta,\omega)$ and the process  $\{\langle \mu_t^x,\psi\rangle\ t\geq 0\}$ are independent $\vartheta$-almost everywhere, as required.\\

\noindent This observation allows us to write the following simplified version of $\Theta_t$, 
\begin{align*}
\Theta_t(x,\eta)
  &=\int_{{\Omega^2}}\int_{\mathbb{Y}} \psi(y)(f(Z_{r}(\eta,\omega)+u_r^{\psi}(x,v)/\sigma_r)- \E[f(Z_r)])\mu_t^x(v,dy)\Q(dv)\Q(d \omega).
\end{align*}
From here it follows that 
\begin{align*}
\E[Z_{r}f(Z_{r})]
  &= \frac{1}{\sigma_r}\int_0^r\int_{\mathbb{X}}
  \int_{{\Omega}}\int_{\mathbb{Y}} \psi(y)\E[f(Z_{r}+u_r^{\psi}(x,v)/\sigma_r)- f(Z_r)]\mu_t^x(v,dy)\Q(dv)\nu(dx)dt\\
  &= \frac{1}{\sigma_r}\int_{\mathbb{X}}
  \int_{{\Omega}} \E[f(Z_{r}+u_r^{\psi}(x,v)/\sigma_r)- f(Z_r)]u_r^{\psi}(x,v)\Q(dv)\nu(dx).
\end{align*}
Adding and subtracting a linear approximation for $f(Z_{r})$ in the right-hand side, and then taking expectation, we obtain
\begin{align}\label{eq:ZrwithfZrfirst}
\E[Z_rf(Z_r)]
  &=\frac{\E[f^{\prime}(Z_{r})]}{\sigma_r^2}\int_{\mathbb{X}}
  \int_{{\Omega}} |u_r^{\psi}(x,v)|^2\Q(dv)\nu(dx)+\mathcal{E}_r[f],
\end{align}
where 
\begin{align}\label{Erdef}
\mathcal{E}_r[f]
  &:=\frac{1}{\sigma_r}\int_{\mathbb{X}}
  \int_{{\Omega}} \E[f(Z_{r}+u_r^{\psi}(x,v)/\sigma_r)- f(Z_r)- f^{\prime}(Z_r)(u_r^{\psi}(x,v)/\sigma_r)]u_r^{\psi}(x,v)\Q(dv)\nu(dx).
\end{align}
One can easily check that in the case $f(x)=x$, $f^{\prime}(x)=1$ the term $\mathcal{E}_r[f]=0$ and $$\E[Z_rf(Z_r)]=\E[|Z_r|^2]=Var[Z_r]=1,$$ 
and consequently, by \eqref{eq:ZrwithfZrfirst} and the fact that $Z_r$ is a centered random variable, 
\begin{align}\label{eq:identitysigmar}
1
  &=\frac{1}{\sigma_r^2}\int_{\mathbb{X}}
  \int_{{\Omega}} |u_r^{\psi}(x,v)|^2\Q(dv)\nu(dx).
\end{align}
We conclude that 
\begin{align}\label{eq:ZrwithfZrsecond}
\E[Z_rf(Z_r)]
  &=\E[f^{\prime}(Z_{r})]+\mathcal{E}_r[f],
\end{align}
which yields $\varepsilon_r=\mathcal{E}_r[f]$.  The Taylor approximation applied to \eqref{Erdef} yields 
\begin{align*}
\mathcal{E}_r[f]
  &\leq \frac{\|f^{\prime\prime}\|}{\sigma_r^3}\int_{\mathbb{X}}
  \E[|u_r^{\psi}(x,v)|^3]\nu(dx).
\end{align*}
Since $f$ belongs to $\mathcal{C}$, we the following 
\begin{align*}
\mathcal{E}_r[f]
  &\leq \frac{2}{\sigma_r^3}\int_{\mathbb{X}}
  \E[|u_r^{\psi}(x,v)|^3]\nu(dx),
\end{align*}
which finishes the proof of \eqref{eq:dWmainzero}.\\

\noindent Now we proceed with the proof of \eqref{eq:dWmain}. To this end, we set $M_{\alpha}(x):=|x|^{\alpha}$, with $\alpha$ in $\R$.  Observe that taking $f(x)=M_2(x)$ in \eqref{eq:ZrwithfZrsecond} yields the identity
\begin{align}\label{eq:secondmain}
\E[Z_r^3]
  &=\E[Z_r]+\mathcal{E}_r[M_{2}]=\mathcal{E}_r[M_{2}].
\end{align}
From identity \eqref{Erdef}, we can easily deduce that 
\begin{align}\label{eq:secondmainprime}
\mathcal{E}_r[M_{2}]
  &:=\frac{1}{\sigma_r^3}\int_{\mathbb{X}}
  \int_{{\Omega}} u_r^{\psi}(x,v)^3\Q(dv)\nu(dx).
\end{align}
Relation \eqref{eq:dWmain} follows from \eqref{eq:dWmainzero} \eqref{eq:secondmain} and \eqref{eq:secondmainprime}

\end{proof}

\section{Application to Particle Systems}\label{sec:particle:systems}

\noindent In this section, we apply Theorem \ref{thm:mainabstractone} to the case where the measures $\mu_t^\mathbf{x}$ are given by
\[
\mu_t^{\mathbf{x}} := a_1 \delta_{X_t^{1,\mathbf{x}}} + \cdots + a_m \delta_{X_t^{m,\mathbf{x}}},
\]
where $\mathbf{X}_t^{\mathbf{x}} = (X_t^{1,\mathbf{x}}, \dots, X_t^{m,\mathbf{x}})$ is a sequence of $(\R^{d})^{m}$-valued processes satisfying 
\begin{align*}
X_{t}^{i,\mathbf{x}}
  &=X_{t}^{i,0}+\mathbf{x},	
\end{align*}
for a given choice of $m$ in $\N$. To state the main applications precisely, we now introduce some notation and hypotheses that will be used throughout this section. The random vector $(\mathbf{X}_{t_1}^{i_1},\mathbf{X}_{t_2}^{i_2} , \mathbf{X}_{t_3}^{i_3} )$ will be assumed to have a joint density $f_{t_1,t_2,  t_3}^{\mathbbm{i}}$, where $\mathbbm{i}$ is a multi-index of the form  $\mathbbm{i}=(i_1,i_2,i_3)$, with $1\leq i_1,i_2,i_3\leq m$. \\

\noindent Particle systems with random initial configurations and independent motions have long been recognized as a natural framework to study fluctuation phenomena via occupation times. Classical works on Poisson systems of independent Brownian particles and critical branching Brownian motion  show that suitably rescaled occupation time functionals exhibit non-trivial Gaussian limits and long-range dependence, with phase transitions in the spatial dimension and in the branching mechanism \cite{C-G,D-W,C-G1984}. In particular, for critical branching systems with symmetric $\alpha$-stable motion, the occupation time fluctuations converge, in appropriate regimes, to self-similar Gaussian processes such as fractional and sub-fractional Brownian motions, or to generalized Wiener processes taking values in the set of tempered distributions, depending on whether the dimension is subcritical, critical, or supercritical \cite{BGT-I,BGT-II}. More recent results extend this picture to age-dependent branching and other variants, confirming the robustness of occupation-time based limits \cite{LM-MS-RG}. At the same time, the measure-valued limits of such systems, and their scaling behavior, are tightly connected with the general theory of superprocesses and measure-valued diffusions \cite{Dawson1993,Etheridge2000,Perkins2002,Dynkin1991}, and with the broader literature on long-range dependence and self-similar processes \cite{MR1280932}. In this sense, Poisson-driven particle systems provide not only a rich source of non-trivial Gaussian limits, but also a unifying microscopic model for a wide class of fluctuation phenomena that our abstract results are designed to capture.\\

\noindent In order to encompass virtually all of the particle-system scenarios discussed above, we introduce the technical condition \textbf{(H)} below on the dynamics of the individual particles. As shown later, this condition is satisfied not only for the classical cases of $\alpha$-stable motions (traditionally the main setting in the occupation-time literature), but also for the largely unexplored regimes of uniformly elliptic Markov diffusions and long-range dependent motions, including particles evolving as fractional Brownian paths. It is worth emphasizing that this includes situations where the Markov property fails or where the trajectories are not even semimartingales, settings for which the fluctuation theory is far from complete (and, in several cases, almost entirely unexplored). Our framework thus extends the existing literature in two directions: (i) it allows substantially more general initial configurations than homogeneous Poisson fields for all the models under consideration (including the classical ones), and (ii) it accommodates a wide spectrum of particle dynamics, ranging from classical Markovian motions and heavy-tailed processes to genuinely non-Markovian models with memory. The anticipated technical condition is presented next\\

\noindent\textbf{(H)} There are strictly positive constants $\kappa$ and $\beta$, such that for all  $\mathbf{a},\mathbf{b},\mathbf{c}\in(\R^{d})^m$ and nonnegative numbers $s_1,s_2$, satisfying  $s_1,s_2<s_3$ and $\mathbbm{i}$ as above, the conditional density of the random vector $(\mathbf{X}_{s_1}^{i_1},\mathbf{X}_{s_2}^{i_2},\mathbf{X}_{s_3}^{i_3})$
$$f_{s_3|s_1,s_2}^{\mathbbm{i}}(\mathbf{a} | (\mathbf{b},\mathbf{c})):=f_{s_1,s_2,s_3}^{\mathbbm{i}}(\mathbf{b},\mathbf{c},\mathbf{a})/f_{s_1,s_2}^{(i_1,i_2)}(\mathbf{b},\mathbf{c}),$$
satisfies 
\begin{align*}
\sup_{\mathbf{a},\mathbf{b},\mathbf{c}\in\R^{md}}f_{s_3|s_1,s_2}^{\mathbbm{i}}(\mathbf{a} | (\mathbf{b},\mathbf{c}))	&\leq \kappa |s_3-(s_1\vee s_2)|^{-\beta d}.
\end{align*}
%{\color{cyan}propongo que lo rojo reemplace esto
%$$f_{s_1,s_2,s_3}^{\mathbbm{i}}(\mathbf{a},\mathbf{b},\mathbf{c})/f_{s_1,s_2}^{(i_2,i_3)}(\mathbf{b},\mathbf{c}),$$  denoted by %$f_{s_3|s_1,s_2}^{\mathbbm{i}}(\mathbf{a} | (\mathbf{b},\mathbf{c}))$, satisfies 
%\begin{align*}
%\sup_{\mathbf{a},\mathbf{b},\mathbf{c}\in\R^{md}}f_{s_3|s_1,s_2}^{\mathbbm{i}}(\mathbf{a} | (\mathbf{b},\mathbf{c}))	&\leq \kappa |s_3-(s_1\vee %s_2)|^{-\beta d}.
%\end{align*}
%}
The next theorem provides a key upper bound to handle the left-hand side of (\ref{eq:dWmainzero}).

\begin{theorem}\label{thm:application}
If $\psi$ is a non-negative function, then under condition \textbf{(H)}, the distribution $\rho_r$ of $Z_r$ satisfies 
\begin{align}\label{eq:dWmainzero-1}
d_{W}(\rho_r,\gamma)
  \leq 	K\frac{r^{1-\beta d}}{\sigma_r},
\end{align} 
for some positive constant $K$ independent of $r$.
\end{theorem}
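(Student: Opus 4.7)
The plan is to substitute the Dirac-particle structure into the bound \eqref{eq:dWmainzero} of Theorem \ref{thm:mainabstractone} and reduce the third moment of $u_r^\psi(x):=\int_0^r\langle\mu_s^x,\psi\rangle\,ds$ to a constant multiple of $r^{1-\beta d}$ times the second moment of the same quantity. Integrating the outcome against $\nu$ and invoking \eqref{eq:identitysigmar} then produces exactly $\sigma_r^2$, which absorbs two of the three powers of $\sigma_r$ in the denominator of \eqref{eq:dWmainzero}. Since $\psi\ge 0$ forces $u_r^\psi(x)\ge 0$, I write
\[
\E\bigl[|u_r^\psi(x)|^3\bigr]=\sum_{i_1,i_2,i_3=1}^{m}a_{i_1}a_{i_2}a_{i_3}\int_{[0,r]^3}\E\bigl[\psi(X_{s_1}^{i_1,x})\psi(X_{s_2}^{i_2,x})\psi(X_{s_3}^{i_3,x})\bigr]\,ds_1\,ds_2\,ds_3
\]
and decompose the triple integral into the six simplicial regions $D_\pi=\{s_{\pi(1)}\le s_{\pi(2)}\le s_{\pi(3)}\}$ indexed by permutations $\pi$ of $\{1,2,3\}$.

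On $D_\pi$, I factor the joint density of $(\mathbf{X}_{s_1}^{i_1,x},\mathbf{X}_{s_2}^{i_2,x},\mathbf{X}_{s_3}^{i_3,x})$ into the conditional density of the latest coordinate times the joint density of the two earliest, and bound the former by $\kappa(s_{\pi(3)}-s_{\pi(2)})^{-\beta d}$ via hypothesis \textbf{(H)}. Using the translation relation $X_s^{i,x}=X_s^{i,0}+x$, the integration of $\psi$ against the latest-position variable contributes the constant $\|\psi\|_{L^1}$, independent of $x$. The three-point expectation on $D_\pi$ is thus controlled by
\[
\kappa\,\|\psi\|_{L^1}\,(s_{\pi(3)}-s_{\pi(2)})^{-\beta d}\,\E\bigl[\psi(X_{s_{\pi(1)}}^{i_{\pi(1)},x})\psi(X_{s_{\pi(2)}}^{i_{\pi(2)},x})\bigr].
\]

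Integrating the singular factor first in $s_{\pi(3)}\in[s_{\pi(2)},r]$ yields $(1-\beta d)^{-1}(r-s_{\pi(2)})^{1-\beta d}\le(1-\beta d)^{-1}r^{1-\beta d}$, which implicitly requires $\beta d<1$. Extending the remaining double integral in $(s_{\pi(1)},s_{\pi(2)})$ to all of $[0,r]^2$ by positivity and summing over the multi-index $\mathbbm{i}$ with weights $a_{i_1}a_{i_2}a_{i_3}$, the unconstrained sum over the distinguished index $i_{\pi(3)}$ factors out as $A:=\sum_{k=1}^m a_k$, while the remaining weighted double sum reconstructs $\E[|u_r^\psi(x)|^2]$ via its expansion $\sum_{j_1,j_2}a_{j_1}a_{j_2}\int_{[0,r]^2}\E[\psi(X_s^{j_1,x})\psi(X_t^{j_2,x})]\,ds\,dt$. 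Summing over the six permutations produces the pointwise bound $\E[|u_r^\psi(x)|^3]\le C\,r^{1-\beta d}\,\E[|u_r^\psi(x)|^2]$ for a constant $C$ depending only on $\kappa$, $\beta d$, $\|\psi\|_{L^1}$ and $(a_k)_{k\le m}$.

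Finally, integrating this inequality against $\nu$ and using \eqref{eq:identitysigmar} in the form $\int_{\R^d}\E[|u_r^\psi(x)|^2]\,\nu(dx)=\sigma_r^2$, the Stein bound \eqref{eq:dWmainzero} becomes
\[
d_W(\rho_r,\gamma)\le\frac{\sqrt{2}}{\sigma_r^3}\cdot C\,r^{1-\beta d}\,\sigma_r^2=\frac{K\,r^{1-\beta d}}{\sigma_r},
\]
which is precisely \eqref{eq:dWmainzero-1}. The hard part is the combinatorial bookkeeping of the six time orderings: after applying \textbf{(H)} to the latest time in each simplex and summing the resulting bounds, one must verify that the surviving two-point pieces recombine, with the correct multiplicities and multi-index weights, into a single clean copy of $\E[|u_r^\psi(x)|^2]$. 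Positivity of $\psi$ (which permits extending each truncated double time domain to the full square) and the product structure of $a_{i_1}a_{i_2}a_{i_3}$ (which factors cleanly over the distinguished index) are what make this reassembly possible.
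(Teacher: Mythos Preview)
Your proposal is correct and follows essentially the same route as the paper: both start from \eqref{eq:dWmainzero}, expand the cube of $u_r^\psi$, restrict to the region where one time variable dominates, invoke \textbf{(H)} to peel off a factor $\kappa\|\psi\|_{L^1}(s_{\max}-s_{\mathrm{mid}})^{-\beta d}$, integrate the latest time to produce $r^{1-\beta d}/(1-\beta d)$, and recognise the surviving two-point piece (after summing the freed index to $A$) as $\sigma_r^2$ via \eqref{eq:identitysigmar}. The only cosmetic difference is that you decompose explicitly into six simplices whereas the paper invokes a symmetry argument to reduce to $\{s_3\ge s_1\vee s_2\}$; your version is in fact slightly cleaner in tracking the constants and in making the implicit hypothesis $\beta d<1$ explicit.
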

\noindent The proof of the above result, presented in Section \ref{sec:secondhtoemre}, builds on Theorem \ref{thm:mainabstractone} and makes essential use of the process density, which is controlled by condition \textbf{(H)}. Although these density conditions could possibly be relaxed, identifying the sharpest assumptions appears to be a separate problem beyond the scope of this paper. We defer it to future research.\\
 
\noindent In the following subsections, we present several particular instances of the above result, including well known cases in the literature, which are typically established only for Poisson configurations with Lebesgue control measure. This assumption is substantially relaxed in the present paper. It is also worth emphasizing that our approach is fundamentally different from classical methods. Traditionally, such results are obtained by exploiting the Markovian structure of the underlying dynamics, deriving evolution equations for the relevant moments, and then analyzing these equations to identify the limiting behavior. In contrast, our method, based on the Stein framework introduced in Section \ref{sec:Stein-intro}, provides a perspective that considerably simplifies the proof mechanism.

\subsection{Particles performing $\alpha$-stable motions}\label{sec:firstpresentslpha}
In this subsection, we study the specific case in which the processes $\mu^x$, defined in~\eqref{eq:mutdefsumdeltas}, are such that the vector $\mathbf{X}^x$ follows an $\alpha$-stable process, with $0<\alpha\leq 2$. Before addressing the problem in detail, we provide a brief overview of related previous work.  \\

\noindent Our starting point is the work of~\cite{BGT-I,BGT-II}, where the authors study a Poissonian system with Lebesgue control measure, consisting of particles that generate independent $\alpha$-stable motions on $\mathbb{R}^d$. In their setting, the authors consider an independent collection of $d$-dimensional processes $\{\mathbf{X}^x : x \in \mathbb{R}^d\}$, each with independent $\alpha$-stable components started at the origin. This family naturally induces the random measure  
\[
\mu_t^{x}(dy) := \delta_{\,x + \mathbf{X}_t^0}(dy).
\]
Then, for a fixed test function $\psi:\R^{d}\rightarrow\R$, define the normalized additive functional
\begin{align}\label{mainZ-T}
\mathcal{Z}_T[\psi] := \frac{1}{F_T} \left(A_T[\psi] - \mathbb{E}[A_T[\psi]]\right),
\end{align}
where the scaling factor $F_T$ is defined as
\begin{align*}
F_T :=
\begin{cases}  
T^{1-d/2\alpha} & \text{if } d < \alpha, \\
(T/\log T)^{-1/2} & \text{if } d = \alpha, \\
T^{-1/2} & \text{if } d > \alpha.
\end{cases}
\end{align*}
It is shown in~\cite[Theorem 2.1]{BGT-I} and~\cite[Theorem 2.1]{BGT-II} that $\mathcal{Z}_T$ converges in distribution, as $T \to \infty$, to a normal random variable $\mathcal{N}(0, \sigma^2)$, for a suitably chosen variance $\sigma^2$ that depends on the parameters of the system but not on the number of particles. Related results were obtained earlier in~\cite{D-W} for systems in which the particles follow independent Brownian motions in $\mathbb{R}^d$, corresponding to the case $\alpha=2$. These developments build on the seminal work~\cite{C-G1984}, where analogous results were established for Poissonian systems of particles performing independent random walks on the lattice $\mathbb{Z}^d$.

\medskip

\noindent It is worth to recall that all of the aforementioned results are focused exclusively on establishing distributional convergences (in particular, functional convergence) under various parameter settings for the model. However, to our knowledge, there are no results in the literature that discuss the rate of convergence of the distribution of $\mathcal{Z}_t$  as $t$ tends to infinity. The upcoming corollary, derived from Theorem \ref{thm:application}, is the first to address this specific question.

\medskip
\noindent We now return to the more general framework of Dirac particle systems introduced in Section \ref{sec:modeldesc}, which includes as particular cases the models discussed above. Specifically, we consider $\alpha$-stable processes $\mathbf{X}_t^x$ composed of $m$ independent blocks,  
\[
\mathbf{X}_t^{x} = \big(\mathbf{X}_t^{x,1}, \dots, \mathbf{X}_t^{x,m}\big),
\] 
where each $\mathbf{X}_t^{x,i}$ is a symmetric $\alpha$-stable process in $\mathbb{R}^d$. In this setting, we observe that $\alpha \beta = 1$. Consequently, the condition $1>\beta d$ is equivalent to requiring $d$ is equal to one. Under the assumptions of Theorem \ref{thm:application}, and in view of \eqref{sec:firstpresentslpha}, an application of Theorem \ref{thm:application} yields the following result. Throughout this section, we shall assume that the above mentioned conditions on the process are satisfied. One can easily check that condition \textbf{(H)} holds for $\beta=1/\alpha$. This observation, in combination with Lemma \ref{eq:sigmafrombelowstable} in Appendix A, yields the following result. 

\begin{Proposition}\label{eq:propalphastablecaseapplication}
If $d=1$ and $\psi\geq0$, then the Wasserstein distance between $\rho_r$ and the standard Gaussian distribution $\gamma$ satisfies
\[
d_W(\rho_r,\gamma) \;\leq\; Cr^{-1/(2\alpha)}.
\]
for some constant $C>0.$
\end{Proposition}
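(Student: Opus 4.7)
The plan is to invoke Theorem \ref{thm:application} after verifying that the $\alpha$-stable framework satisfies the structural hypothesis \textbf{(H)} with exponent $\beta = 1/\alpha$, and then to combine the resulting upper bound with the variance lower bound supplied by Lemma \ref{eq:sigmafrombelowstable}. I first check \textbf{(H)}. Each block $\mathbf{X}_t^{x,i}$ is a symmetric $\alpha$-stable L\'evy process in $\R^d$, and distinct blocks of $\mathbf{X}^x$ are independent by construction. By the independence of increments together with the Markov property, for any three times with $s_3 > s_1 \vee s_2$ and any multi-index $\mathbbm{i} = (i_1,i_2,i_3)$, the conditional density $f_{s_3 | s_1, s_2}^{\mathbbm{i}}(\mathbf{a} \mid (\mathbf{b}, \mathbf{c}))$ is either the unconditional density of $\mathbf{X}_{s_3}^{x,i_3}$ (when $i_3$ differs from both $i_1$ and $i_2$) or the $\alpha$-stable transition density of the corresponding block from the more recent of the two past times to $s_3$. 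In every case, the self-similar scaling $f_t(\mathbf{z}) = t^{-d/\alpha} f_1(\mathbf{z}/t^{1/\alpha})$ and the boundedness of the unit-time density $f_1$ yield the uniform estimate $\sup_{\mathbf{a},\mathbf{b},\mathbf{c}} f_{s_3|s_1,s_2}^{\mathbbm{i}}(\mathbf{a} \mid (\mathbf{b}, \mathbf{c})) \leq \kappa\, |s_3 - s_1 \vee s_2|^{-d/\alpha}$, so \textbf{(H)} holds with $\beta = 1/\alpha$.

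Specializing to $d = 1$, Theorem \ref{thm:application} then yields
\begin{equation*}
d_W(\rho_r, \gamma) \leq K\, \frac{r^{\,1-1/\alpha}}{\sigma_r}.
\end{equation*}
To convert this into the announced rate $r^{-1/(2\alpha)}$, I appeal to the variance lower bound of Lemma \ref{eq:sigmafrombelowstable} in Appendix A, which asserts $\sigma_r \geq c\, r^{\,1 - 1/(2\alpha)}$ (equivalently, $\sigma_r^2 \geq c^2\, r^{\,2 - 1/\alpha}$). Substituting this into the previous display gives
\begin{equation*}
d_W(\rho_r, \gamma) \leq \frac{K}{c}\, r^{(1 - 1/\alpha) - (1 - 1/(2\alpha))} = \frac{K}{c}\, r^{-1/(2\alpha)},
\end{equation*}
which is the claim with $C := K/c$.

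The main obstacle is not conceptual but lies entirely in the input from Lemma \ref{eq:sigmafrombelowstable}: obtaining the matching variance lower bound requires a careful analysis of the two-point covariance $\mathbb{E}\bigl[\langle \mu_t^x, \psi\rangle\, \langle \mu_s^x, \psi\rangle\bigr]$, integrated against the control measure $\nu$. The non-negativity of $\psi$, the strict positivity of the unit-time $\alpha$-stable density on a neighborhood of the origin, and the assumption that $\nu$ is bounded away from zero combine to yield the displayed lower bound on $\sigma_r$. Beyond this single analytical input, the only care needed is in verifying \textbf{(H)} for all admissible multi-indices and in invoking the non-negativity of $\psi$ at the step where Theorem \ref{thm:application} is applied, both of which are routine.
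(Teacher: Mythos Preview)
Your proposal is correct and follows essentially the same approach as the paper: verify condition \textbf{(H)} with $\beta = 1/\alpha$, apply Theorem~\ref{thm:application} to obtain $d_W(\rho_r,\gamma)\le K r^{1-1/\alpha}/\sigma_r$, and then insert the variance lower bound $\sigma_r \ge c\, r^{1-1/(2\alpha)}$ from Lemma~\ref{eq:sigmafrombelowstable}. The only difference is that you spell out the verification of \textbf{(H)} via self-similarity and the Markov/independent-increments structure in more detail than the paper, which simply asserts it.
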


\subsection{Particles moving as a diffusion on $\R^d$}\label{eq:sec:diffusivecaseexamples}

Let $a:\R^d\rightarrow \R^{d\times d}$ be a measurable symmetric function(matrix) satisfying the uniform ellipticity condition
\[
\eta I_d \leq a(x) \leq \frac{1}{\eta} I_d,
\]
for $x$ in $\R^d$, for some $0<\eta\leq 1$, where $I_d$ denotes the $d$-dimensional identity matrix. We additionally assume that $a$ has bounded derivatives of all orders.  Under these conditions, consider the diffusion process $X=\{X_t:t\ge 0\}$ on $\R^d$ associated to the operator
\[
\mathcal{L}=\nabla\cdot (a\nabla),
\]
and let $P=\{P_t:t\ge 0\}$ denote its associated Feller semigroup. Let $\psi$ be a given test function in $C_b(\R^d)$. It is well known that the semigroup $P$ admits a transition density $p(t,x,y)$ such that
\[
P_t\psi(x)=\int_{\R^d}\psi(y)\,p(t,x,y)\,dy.
\]
Moreover, under our assumptions on $a$, the density $p$ is infinitely differentiable in the interior of any compact subset of $(0,\infty)\times\R^d\times\R^d$ and solves the Fokker-Planck equation
\[
\frac{\partial}{\partial t}p(t,x,y)
   = \mathcal{L}p(t,x,\cdot)(y),\,\, \mbox{for} \,\, (t,x,y)\in(0,\infty)\times\R^d\times\R^d. 
\]
An important feature of such uniformly elliptic diffusions is that their transition density exhibits Gaussian-type upper and lower bounds. In particular, $p(t,x,y)$ is symmetric in $(x,y)$ and satisfies
\begin{align}\label{bounded-density-diffusion}
    \frac{1}{K t^{d/2}}
      \exp\!\left(-\frac{K|y-x|^2}{t}\right)
    \;\leq\;
    p(t,x,y)
    \;\leq\;
    \frac{K}{t^{d/2}}
      \exp\!\left(-\frac{|y-x|^2}{K t}\right),
\end{align}
where $K$ is some constant larger than or equal to one, only depending on $\eta$ and $d$. The reader is referred to (I.0.10) in \cite{Stroock} for further details about \eqref{bounded-density-diffusion}.\\
% {\color{red} Hay que obtener tambi\'en la cota por abajo. Luego se va a ocupar}.\\

\noindent Now,  let us consider a Poissonian  particle system such that the particles follow independent motions on $\R^d$ driven by the diffusion process $X$ described above. One can easily check that condition \textbf{(H)} holds for $\beta=1/\alpha$, $\alpha=2$. This observation, in combination with Lemma \ref{eq:sigmafrombelowdiffusion} in Appendix 1, yields the following result.

\begin{Proposition}\label{eq:propdiffusioncaseapplication}
If $d=1$ and $\psi\geq0$, then the Wasserstein distance between $\rho_r$ and the standard Gaussian distribution $\gamma$ satisfies
\[
d_W(\rho_r,\gamma) \;\leq\; Cr^{-1/4}.
\]
for some constant $C>0.$
\end{Proposition}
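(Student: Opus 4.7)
The plan is to verify condition \textbf{(H)} for the diffusion model, apply Theorem~\ref{thm:application}, and then combine the resulting estimate with the variance lower bound of Lemma~\ref{eq:sigmafrombelowdiffusion} provided in the appendix.

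First, I would check \textbf{(H)} with $\beta = 1/2$. By the independence of the $m$ coordinate blocks of $\mathbf{X}^{x}$ and the Markov property of each coordinate diffusion, the joint density of $(\mathbf{X}^{i_1}_{s_1}, \mathbf{X}^{i_2}_{s_2}, \mathbf{X}^{i_3}_{s_3})$ factors into a product of transition kernels $p$; the conditional factor corresponding to the latest time reduces to a transition density evaluated at the increment $s_3 - (s_1 \vee s_2)$ (when the block index $i_3$ differs from $i_1, i_2$ only a marginal factor appears, which is even easier to control). The Gaussian upper bound in \eqref{bounded-density-diffusion} then yields
\begin{align*}
\sup_{\mathbf{a}, \mathbf{b}, \mathbf{c} \in \R^{md}} f_{s_3 \mid s_1, s_2}^{\mathbbm{i}}(\mathbf{a} \mid (\mathbf{b}, \mathbf{c})) \leq K\, |s_3 - (s_1 \vee s_2)|^{-d/2},
\end{align*}
so \textbf{(H)} holds with $\beta = 1/2$. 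Specializing Theorem~\ref{thm:application} to $d = 1$ then gives
\begin{align*}
d_W(\rho_r, \gamma) \leq K\, \frac{r^{1/2}}{\sigma_r}.
\end{align*}

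The second step is to invoke Lemma~\ref{eq:sigmafrombelowdiffusion}, which asserts that $\sigma_r \geq c\, r^{3/4}$ for $r$ sufficiently large. Substituting this into the previous display yields the claimed rate $d_W(\rho_r, \gamma) \leq C\, r^{-1/4}$.

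The main obstacle is thus the variance lower bound itself, which is where the dimension restriction $d = 1$ enters in a decisive way. Using identity \eqref{eq:identitysigmar}, $\sigma_r^2$ can be expressed as a double time integral over $[0, r]^2$ of a two-point correlation kernel attached to the path of a single particle. The Gaussian lower bound in \eqref{bounded-density-diffusion}, together with the positivity and the compact support of $\psi$, should allow one to bound this kernel from below by a multiple of $|t - s|^{-1/2}$, and integration over $[0, r]^2$ then produces $\sigma_r^2 \gtrsim r^{3/2}$. Making this estimate rigorous at the level of the Dirac sum $\mu_t^{x}$ and a general nonnegative $\psi$ with compact support is precisely what Lemma~\ref{eq:sigmafrombelowdiffusion} is designed to accomplish, and is the technically most demanding part of the argument.
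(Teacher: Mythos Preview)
Your proposal is correct and follows essentially the same route as the paper: verify condition \textbf{(H)} with $\beta=1/2$ via the Gaussian upper bound \eqref{bounded-density-diffusion}, apply Theorem~\ref{thm:application} to obtain $d_W(\rho_r,\gamma)\le K r^{1/2}/\sigma_r$, and then insert the variance lower bound of Lemma~\ref{eq:sigmafrombelowdiffusion}. The paper states exactly this strategy in the sentence preceding the proposition; your write-up simply spells out the verification of \textbf{(H)} in more detail than the paper does.
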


\noindent It is important to stress that, to the best of our knowledge, the literature contains no qualitative fluctuation results for particle systems driven by general elliptic Markov diffusions. Our result goes beyond this gap by simultaneously establishing the qualitative behavior and providing an explicit rate of convergence.

\begin{remark}\label{remark:gap_in_literature}
There is a clear gap in the literature regarding a functional version of the result above.  
While functional limit theorems for occupation-time fluctuations have been established in a variety of classical settings (most notably for branching and non-branching systems driven by Brownian or $\alpha$-stable motions \cite{C-G,BGT-I,BGT-II,LM-MS-RG}), no analogous functional convergence results appear to be available for particle systems governed by general elliptic diffusions or their non-Markovian counterparts to be presented in Section \ref{eq:fractionalbrownianmotionparticles}.  Exploring this functional behavior remains an interesting direction for future research.
\end{remark}

\subsection{Particles moving as a fractional Brownian motion on $\R^d$}
\label{eq:fractionalbrownianmotionparticles}

We now turn to the case where the particle motions are driven by fractional Brownian paths. A fractional Brownian motion $B^{H}=\{B_{t}^{H},\, t\ge 0\}$ in $\R^{d}$ with Hurst parameter $H$ in $(0,1)$, is a centered Gaussian process with covariance
\[
\mathbb{E}\!\left[ B_t^H \cdot B_s^H \right]
   = \tfrac12 \left( t^{2H} + s^{2H} - |t-s|^{2H} \right) I_d.
\]
Fractional Brownian motion plays a distinguished role among Gaussian processes: it is self-similar, has stationary increments, and its Hurst parameter acts as a fundamental tuning parameter for both path regularity and the presence (or absence) of long-range dependence. These features make $B^{H}$ a canonical model for highly oscillatory phenomena and time series with memory effects, where neither the Markov property nor the semimartingale structure is appropriate. We refer the reader to the monographs of Mishura \cite{Mishura2008} and Nourdin \cite{NourdinFBM} for further background on fractional Brownian motion and its applications.\\

Within the framework of the present paper, particle systems driven by $B^{H}$ thus provide a natural venue for initiating the study of temporal correlations in the motion of the evolving measures. Consider now a Poissonian particle system such that the particles follow independent motions on $\R^d$ driven by $B^H$. One can check that condition \textbf{(H)} holds with $\beta = H$ due to the local non-determinism property of the fractional Brownian motion (see \cite{Xiao2009SLND}). To verify the validity of \textbf{(H)}, we observe that for every $s_1,s_2< s_3$, the random vector $(\mathbf{X}_{s_1}^{i_1},\mathbf{X}_{s_2}^{i_2},\mathbf{X}_{s_3}^{i_3})$
%\[
%\big(\mathbf{X}_{s_1}^{i_1},\mathbf{X}_{s_2}^{i_2},\mathbf{X}_{s_3}^{i_3}\big)
%\]
is Gaussian. Thus, if we denote its conditional mean and conditional covariance by
\[
m_{s_3|s_1,s_2}^{\mathbbm{i}}(\mathbf{b},\mathbf{c})
   := \mathbb{E}\big[\mathbf{X}_{s_3}^{i_3}\,\big|\,\mathbf{X}_{s_1}^{i_1}=\mathbf{b},
   \mathbf{X}_{s_2}^{i_2} =\mathbf{c}],
\]
and
\[
v_{s_3|s_1,s_2}
   := \mathrm{Var}\big[ B_{s_3}^H \,\big|\,
        B_{s_1}^H,B_{s_2}^H \big],
\]
respectively. The conditional density can be written as
\begin{align}\label{eq:densitycondgauss}
f_{s_3|s_1,s_2}^{\mathbbm{i}}(\mathbf{a}\mid(\mathbf{b},\mathbf{c}))
=
\frac{1}{(2\pi v_{s_3|s_1,s_2})^{md/2}}
\exp\!\left(
  -\frac{1}{2v_{s_3|s_1,s_2}}
  \big\|\mathbf{a} - m_{s_3|s_1,s_2}^{\mathbbm{i}}(\mathbf{b},\mathbf{c})\big\|^2
\right),
\end{align}
By the strong local nondeterminism property of fractional Brownian motion (see \cite{Xiao2009SLND}), there exists a constant $c>0$ such that
\begin{align}\label{eq:variancebound}
v_{s_3|s_1,s_2}
   := \Var\!\big( B_{s_3}^H \,\big|\,
        B_{s_1}^H, B_{s_2}^H \big)
   \;\ge\;
   c\,\big|s_3 - (s_1 \vee s_2)\big|^{2H}.	
\end{align}
Condition \textbf{(H)} follows from \eqref{eq:densitycondgauss} and \eqref{eq:variancebound}. This observation, in combination with Lemma \ref{eq:sigmafrombelowfBm} in the Appendix A, yields the following result.

\begin{Proposition}\label{eq:propfractionalcaseapplication}
The Wasserstein distance between $\rho_r$ and the standard Gaussian distribution $\gamma$ satisfies
\[
d_W(\rho_r,\gamma) \;\leq\; C (r^{-3H/4}\Indi{\{d=1\}}
+r^{-3H/2} \log(r)^{-1/2}\Indi{\{d=2\}} 
+ r^{-H(2d-1)/2}\Indi{\{d\geq 3\}}).
\]
for some constant $C>0.$
\end{Proposition}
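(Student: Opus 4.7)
The plan mirrors the strategy used in Propositions \ref{eq:propalphastablecaseapplication} and \ref{eq:propdiffusioncaseapplication}: verify condition \textbf{(H)}, invoke Theorem \ref{thm:application} as a black box, and then substitute a tailor-made lower bound for $\sigma_r$ provided by Lemma \ref{eq:sigmafrombelowfBm} in Appendix A.

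The verification of \textbf{(H)} with $\beta=H$ is essentially carried out in the discussion preceding the statement. Since $(\mathbf{X}_{s_1}^{i_1},\mathbf{X}_{s_2}^{i_2},\mathbf{X}_{s_3}^{i_3})$ is jointly Gaussian with coordinates built from independent fractional Brownian components, the conditional density of $\mathbf{X}_{s_3}^{i_3}$ given the other two coordinates has the explicit form \eqref{eq:densitycondgauss} with conditional covariance $v_{s_3|s_1,s_2} I$. Strong local nondeterminism \eqref{eq:variancebound} gives $v_{s_3|s_1,s_2}\geq c|s_3-(s_1\vee s_2)|^{2H}$, and bounding the supremum of the Gaussian density in $\mathbf{a}$ yields the estimate of \textbf{(H)} with $\beta=H$, uniformly in the multi-index $\mathbbm{i}$.

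Plugging $\beta=H$ into Theorem \ref{thm:application} produces the master bound
\[
d_W(\rho_r,\gamma)\;\leq\; K\,\frac{r^{1-Hd}}{\sigma_r}.
\]
The proof of the proposition then reduces to establishing the dimension-dependent variance lower bounds $\sigma_r\gtrsim r^{1-H/4}$ in $d=1$, $\sigma_r\gtrsim r^{1-H/2}(\log r)^{1/2}$ in $d=2$, and $\sigma_r\gtrsim r^{1-H/2}$ in $d\geq 3$, which are precisely the content of Lemma \ref{eq:sigmafrombelowfBm}. An elementary substitution into the master bound then produces, case by case, the rates $r^{-3H/4}$, $r^{-3H/2}(\log r)^{-1/2}$ and $r^{-H(2d-1)/2}$ announced in the statement.

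The technical heart of the proof is therefore Lemma \ref{eq:sigmafrombelowfBm}. Using \eqref{eq:identitysigmar} to rewrite $\sigma_r^2$ as the integral of $\mathbb{E}[|u_r^{\psi}(x)|^2]$ against $\nu$, and unfolding $u_r^{\psi}$ in the Dirac-particle setting, one is led to analyze a double time integral of the fBm transition kernel evaluated at time differences $|t-s|^{2H}$, tested against the autocorrelation of $\psi$ and the density of $\nu$. The resulting integrand exhibits the classical subcritical--critical--supercritical trichotomy in the dimension, and extracting the correct polynomial rate together with the logarithmic correction that appears in the critical dimension $d=2$ is the delicate technical point; once that is done, the proposition follows by a direct substitution into the abstract framework of Section \ref{sec:abstractmomenthm}.
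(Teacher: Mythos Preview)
Your proposal is correct and follows exactly the paper's own argument: verify condition \textbf{(H)} with $\beta=H$ via strong local nondeterminism (as done in \eqref{eq:densitycondgauss}--\eqref{eq:variancebound}), apply Theorem~\ref{thm:application} to obtain $d_W(\rho_r,\gamma)\le K r^{1-Hd}/\sigma_r$, and then substitute the dimension-dependent lower bounds for $\sigma_r$ from Lemma~\ref{eq:sigmafrombelowfBm}; your case-by-case arithmetic matches the stated rates.
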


\noindent
The proof of Proposition~\ref{eq:propfractionalcaseapplication} relies crucially on establishing a suitable lower bound for the conditional variances in condition \textbf{(H)}. The argument ultimately proceeds by comparing the fractional Brownian motion model with an adjustment of the standard Brownian motion. We do not claim that the resulting rates are sharp, and improving them (or even identifying the optimal dependence on $H$ and $d$ for the validity of the asymptotic gaussianity), is left as an interesting question for future research. In the same vein, the problem of establishing a  {functional} version of the convergence in Proposition \ref{eq:propfractionalcaseapplication} remains completely open.

\section{Application to the Dyson process}\label{sec:spectralempirical}
In this section, we implement our methodology in the setting of the Dyson Brownian motion. This process, first introduced by Dyson in his seminal 1962 work \cite{Dyson1962}, provides a canonical stochastic model for the dynamics of eigenvalues of large symmetric matrices. The reader is referred to \cite{AndersonGuionnetZeitouni2010,Mehta2004} for further references on the topic. The details of the model are as follows: let $W_t$ denote a real symmetric matrix-valued Brownian motion with independent Brownian entries. The Dyson Brownian motion can be defined as the matrix process
\[
X_t \;=\; X_0 \;+\; \frac{1}{\sqrt{d}}\, W_t,
\]
where $X_0$ is an initial real symmetric matrix. A fundamental feature of $X_t$ is that its eigenvalue dynamics can be described explicitly. Let $\lambda_1(t),\dots,\lambda_d(t)$ denote the eigenvalues of $X_t$. By a careful application of It\^o calculus techniques, one can show that the eigenvalues satisfy the celebrated Dyson system
\begin{align}\label{eq:dynamicslambda}
d\lambda_i(t) \;=\; \frac{1}{\sqrt{d}}\, dB_i(t) 
+ \frac{1}{2d}\sum_{j\ne i}\frac{1}{\lambda_i(t)-\lambda_j(t)}\,dt,	
\end{align}
where $B_1,\dots,B_d$ are independent standard real-valued Brownian motions. The evolution depends on the initial matrix $X_0$ only through its spectrum, so the dynamics is fully determined by the initial configuration $\{\lambda_1(0),\dots,\lambda_d(0)\}$, regardless of the choice of eigenvectors. It is far from obvious that the evolution \eqref{eq:dynamicslambda} is well defined, since an eigenvalue collision would cause the interaction terms on the right-hand side to blow up, compromising the well-posedness of the system. Fortunately, it is a well-known result that such collisions almost surely do not occur (Th. 4.3.2 in \cite{AndersonGuionnetZeitouni2010}).  Once the well-posedness of the system is guaranteed, the process  
\[
\lambda(t) \;=\; \big(\lambda_1(t),\dots,\lambda_d(t)\big)
\]  
becomes a Markov diffusion on $\mathsf W_d:=\{\lambda\in\mathbb R^d:\ \lambda_1<\cdots<\lambda_d\}$ with infinitesimal generator  
\[
\mathcal{L} f(\lambda) \;=\; \frac{1}{2d}\sum_{i=1}^d \partial_{i}^2 f(\lambda)
\;+\; \frac{1}{2d}\sum_{i\neq j}\frac{1}{\lambda_i-\lambda_j}\,\partial_i f(\lambda),
\]
for smooth test functions $f:\R^d \rightarrow \R$. In the sequel, $\upsilon_{t}$ will denote the transition kernel associated to the above infinitesimal generator, which is confined to the region of non-collision $\mathsf W_d$. To connect with the theory developed throughout the paper, we introduce the empirical point measure  
\[
\mu_t^d \;=\; \sum_{i=1}^d a_i\,\delta_{\lambda_i(t)},
\]
where $a_1,\dots,a_{d}>0$, and consider the  additive functionals associated defined earlier. As in the previous sections, this measure-valued dynamics, together with the initial Poissonian configuration introduced in the introduction, naturally induces a renormalized additive functional. To establish its asymptotic normality along with the corresponding rate of convergence, as presented in Proposition~\ref{eq:dysoncaseapplication} below, we follow the same strategy used in the proof of Theorem~\ref{thm:application}. Define 
\begin{align*}
\mathcal{R}_r
  &:=\frac{1}{\sigma_r^3}\int_{\mathsf W_d}\E\left[\left|\int_0^r\langle\mu^x_s,\psi\rangle ds\right|^3\right] \nu(dx).
\end{align*}
As before, we can show that 
\begin{align*}
\mathcal{R}_r
  &\leq A\sigma_r^{-3}\int_{\mathsf W_d}\int_{[0,r]^3}\sum_{1\leq i_1,i_2,i_3\leq m}\Indi{\{s_1\leq s_2\leq s_3\}}a_{i_1}a_{i_2}\E[\psi(\mathbf{X}_{s_1}^{\mathbf{y},i_1})\psi(\mathbf{X}_{s_2}^{\mathbf{y},i_2})\psi(\mathbf{X}_{s_3}^{\mathbf{y},i_3})]d\mathbf{s}\nu(d\mathbf{y}),
\end{align*}
which by the Markov property yields 
\begin{align*}
\mathcal{R}_r
  &\leq A\sigma_r^{-3}\int_{\mathsf W_d^{4}}\int_{[0,r]^3}\sum_{1\leq i_1,i_2,i_3\leq m}\Indi{\{s_1\leq s_2\leq s_3\}}a_{i_1}a_{i_2}\upsilon_{s_1}(\mathbf{y},\mathbf{z}_1)
  \upsilon_{s_2-s_1}(\mathbf{z}_1,\mathbf{z}_2)
  \upsilon_{s_3-s_2}(\mathbf{z}_2,\mathbf{z}_3)\\
  &\times \psi(\mathbf{z}_1)
  \psi(\mathbf{z}_2)\psi(\mathbf{z}_3)d\mathbf{s}d\mathbf{z}\nu(d\mathbf{y}).
\end{align*}
Next, we bound the term 
\begin{align*}
J:=\int_{\mathsf W_d}\psi(\mathbf{z}_3)\,\upsilon_{s_3-s_2}(\mathbf{z}_2,\mathbf{z}_3)\,d\mathbf{z}_3
  \leq \|\psi\|_{\infty}\,\mathbb{P}[|\mathbf{\lambda}(s_3-s_2)|\leq r\mid \lambda(0)=\mathbf{z}_2],
\end{align*}
where $r$ denotes the diameter of the support of $\psi$. To bound the probability term on the right-hand side, we consider a matrix realization of the eigenvalue process itself. 
Let $G_t$ denote a symmetric Brownian motion (matrix) with independent standard Gaussian entries, 
and let $X$ be any real symmetric matrix whose eigenvalues coincide with the initial configuration $\lambda(0)=\mathbf{z}_2\in\mathsf W_d$. Let us define
\[
H_t \;:=\; X + G_t,
\]
so that the eigenvalue process of $H:=\{H_t, t\geq0 \}$ coincides in law with the Dyson Brownian motion started from $\lambda(0)=\mathbf{z}_2$. In what follows, all matrix norms will be understood as the Euclidean (Hilbert-Schmidt) norm, given by 
$$\|A\|^2:=\sum_{i,j=1}^d a_{ij}^2,$$
for $A$ having components $ a_{i,j}$, for $1\leq i,j\leq d$. 
Observe that  \[\|H_t\|^2=tr\left((X+G_t)^2\right)=\sum_{i=1}^d |\lambda_i(X+G_t)|^2.\]
Thus, if $|\lambda(X+G_t)|\leq S$, for some $S>0$, we have that $\|H_t\|\leq \sqrt{d}S$. This inequality, in combination with the fact that  
$$\|H_t\|=\|G_t +X\| \geq \|G_t\|-\|X\|,$$ 
yields  $\{\|H_t\|\leq \sqrt{d} S\}\subset \{\|G_t\|\leq S+\|X\|\}$. Hence, 
\[\mathbb{P}\left[|\lambda(H_t)|\leq \sqrt{d} S\right]\leq \mathbb{P}\left[\|G_t\|\leq \sqrt{d}S+\|X\|\right],\]
which implies 
\[\sup_{|\lambda(X)|\leq R}\mathbb{P}\left[|\lambda(H_t)|\leq \sqrt{d} S\right]\leq \mathbb{P}\left[\|G_t\|\leq \sqrt{d}S+R\right],\]
for any $R>0$.  Taking $R=\|\mathbf{z}_2\|$ and $S$ equal to the diameter of the support of $\psi$, we obtain
\[
\mathbb{P}\big[|\lambda(s_3-s_2)|\le S\mid \lambda(0)=\mathbf{z}_2\big]
\;\le\;
\mathbb{P}\big[\|G_{s_3-s_2}\|\le S+\|\mathbf{z}_2\|\big].
\]
Finally, since $G_t$ has the same law as a Gaussian symmetric matrix with independent $\mathcal N(0,1)$ entries on and above the diagonal and dilated by $\sqrt{t}$, we have
\[
\|G_t\|\stackrel{d}{=}\sqrt{t}\,\chi_D,
\]
where $\chi_D$ denotes the Euclidean norm of a standard Gaussian vector in $\mathbb{R}^D$, with $D:=d(d+1)/2$.
Consequently,
\[
\mathbb{P}\big[|\lambda(s_3-s_2)|\le S\mid \lambda(0)=\mathbf{z}_2\big]
\;\le\;
\mathbb{P}\!\left[\chi_D \le \frac{S+\|\mathbf{z}_2\|}{\sqrt{s_3-s_2}}\right].
\]
The $\chi$-small-ball bound $\mathbb{P}[\chi_D\le y]\le C y^{\frac{D}{2}}\wedge1$ and the fact that $\|\mathbf{z}_2\|\le R_\psi$ for $\mathbf{z}_2$ in the support of $\psi$, we obtain
\[
J \;\le\; C\,(s_3-s_2)^{-\frac{D}{4}},
\]
for some possibly different $C>0$. The above analysis, in combination with the fact that $\upsilon_{s_3-s_2}(\mathbf{z}_2,\cdot)$ is a probability kernel, implies that the term 
\begin{align}\label{eq:intergals3}
\int_{\mathsf W_d}
  \upsilon_{s_3-s_2}(\mathbf{z}_2,\mathbf{z}_3)\psi(\mathbf{z}_3)d\mathbf{z}_3.
\end{align}
is bounded by both the constant equal to $\|\psi\|_{\infty}$, and by a constant multiple of $(s_3-s_2)^{-D/4}$. If $d\geq 2$, we have that $D\geq 3$, and consequently, the integral of $s_{3}$ over $[s_3,\infty)$ of the term \eqref{eq:intergals3} is finite constant independent of $s_2$. Hence, it follows that 
\begin{align*}
\mathcal{R}_r
  &\leq C\sigma_r^{-3}\int_{\mathsf W_d^{3}}\int_{[0,r]^2}\sum_{1\leq i_1,i_2\leq m}\Indi{\{s_1\leq s_2\}}a_{i_1}a_{i_2}\upsilon_{s_1}(\mathbf{y},\mathbf{z}_1)
  \upsilon_{s_2-s_1}(\mathbf{z}_1,\mathbf{z}_2)\\
  &\times \psi(\mathbf{z}_1)
  \psi(\mathbf{z}_2)d\mathbf{s}d\mathbf{z}\nu(d\mathbf{y})=C\sigma_{r}^{-1}.
\end{align*}
From here, the following result about the distribution $\rho_{r}$ of the variable \eqref{mainZ} holds

\begin{Proposition}\label{eq:dysoncaseapplication}
The Wasserstein distance between $\rho_r$ and the standard Gaussian distribution $\gamma$ satisfies
\[
d_W(\rho_r,\gamma)  \leq C\sigma_r^{-1},
\]
for some constant $C>0.$ 
\end{Proposition}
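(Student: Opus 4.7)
The plan is to apply the abstract Wasserstein bound of Theorem \ref{thm:mainabstractone}, specifically inequality \eqref{eq:dWmainzero}, which reduces the problem to estimating
\[
\mathcal{R}_r := \sigma_r^{-3}\int_{\mathsf W_d}\E\!\left[\left|\int_0^r \langle \mu^{\mathbf{y}}_s,\psi\rangle\,ds\right|^{3}\right]\nu(d\mathbf{y}).
\]
First I would expand the third power and symmetrize so that the three time variables are ordered $s_1\le s_2\le s_3$, and then use the representation of $\mu^{\mathbf{y}}_s$ as a weighted sum of Diracs at the Dyson eigenvalues $\lambda_i(s)$. Invoking the Markov property of the eigenvalue process with transition kernel $\upsilon_t$, $\mathcal{R}_r$ can be rewritten as a joint integral over $(s_1,s_2,s_3)\in[0,r]^{3}$ and $(\mathbf{y},\mathbf{z}_1,\mathbf{z}_2,\mathbf{z}_3)\in\mathsf W_d^{4}$ against the chain of kernels $\upsilon_{s_1},\upsilon_{s_2-s_1},\upsilon_{s_3-s_2}$ together with the three factors $\psi(\mathbf{z}_1)\psi(\mathbf{z}_2)\psi(\mathbf{z}_3)$.

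The decisive step is a uniform small-ball estimate for the innermost spatial integral
\[
J := \int_{\mathsf W_d}\psi(\mathbf{z}_3)\,\upsilon_{s_3-s_2}(\mathbf{z}_2,\mathbf{z}_3)\,d\mathbf{z}_3 \;\le\; \|\psi\|_\infty\,\mathbb{P}\!\left[\,|\lambda(s_3-s_2)|\le S\,\big|\,\lambda(0)=\mathbf{z}_2\right],
\]
where $S$ is the diameter of $\mathrm{supp}(\psi)$. To avoid needing an explicit pointwise heat-kernel bound for the Dyson semigroup, I would resort to a matrix realization: pick a symmetric matrix $X$ whose spectrum is $\mathbf{z}_2$ together with a symmetric Brownian matrix $G_t$, so that $H_t := X+G_t$ has eigenvalues distributed as $\lambda(t)$ conditioned on $\lambda(0)=\mathbf{z}_2$. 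Since $\|H_t\|^{2}=\sum_i\lambda_i(H_t)^{2}$ and the Hilbert--Schmidt norm satisfies $\|G_t\|\ge \|H_t\|-\|X\|$, the event $\{|\lambda(H_t)|\le S\}$ is contained in $\{\|G_t\|\le \sqrt{d}\,S+\|X\|\}$. Because $\|G_t\|$ is equal in law to $\sqrt{t}\,\chi_D$ with $D=d(d+1)/2$, the classical $\chi$-small-ball estimate $\mathbb{P}[\chi_D\le y]\le C(y^{D/2}\wedge 1)$ yields $J\le C\big((s_3-s_2)^{-D/4}\wedge\|\psi\|_\infty\big)$.

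To finish, I would exploit the fact that $d\ge 2$ forces $D\ge 3$, so that $\int_{s_2}^{r}J\,ds_3$ is bounded by a constant uniform in $r$ and $s_2$. Substituting back collapses the triple time--space integral to the double integral over $(s_1,s_2)$ and $(\mathbf{z}_1,\mathbf{z}_2)$ involving $\upsilon_{s_1},\upsilon_{s_2-s_1}$ and $\psi(\mathbf{z}_1)\psi(\mathbf{z}_2)$, which is precisely the second-moment expression appearing in the normalization identity \eqref{eq:identitysigmar}. This yields $\mathcal{R}_r\le C\sigma_r^{-3}\cdot\sigma_r^{2}=C\sigma_r^{-1}$, and combining with \eqref{eq:dWmainzero} produces the claimed Wasserstein rate.

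The main obstacle is precisely the small-ball estimate for the Dyson transition kernel integrated against $\psi$: unlike the settings of Section \ref{sec:particle:systems}, no clean pointwise heat-kernel bound is at hand. The matrix-embedding trick circumvents this by comparing $\lambda(t)$ with a Gaussian matrix in Hilbert--Schmidt norm, but it works only once the effective dimension $D=d(d+1)/2$ exceeds the integrability threshold three, which is what restricts the proof mechanism to $d\ge 2$.
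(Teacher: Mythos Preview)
Your proposal is correct and follows essentially the same route as the paper: expand $\mathcal{R}_r$ via the Markov property and the chain of kernels $\upsilon_{s_1},\upsilon_{s_2-s_1},\upsilon_{s_3-s_2}$, control the innermost integral $J$ by the matrix realization $H_t=X+G_t$ together with the Hilbert--Schmidt comparison $\|G_t\|\ge\|H_t\|-\|X\|$ and the $\chi_D$ small-ball bound, then use $D=d(d+1)/2\ge 3$ (for $d\ge2$) to make the $s_3$-integral uniformly bounded and collapse what remains to the variance identity \eqref{eq:identitysigmar}. The only detail you leave slightly implicit, and which the paper spells out, is that $\mathbf{z}_2$ lies in $\mathrm{supp}(\psi)$ when the factor $\psi(\mathbf{z}_2)$ is present, so $\|X\|=\|\mathbf{z}_2\|$ is bounded by a fixed constant depending only on $\psi$; this is what makes the small-ball bound uniform in $\mathbf{z}_2$.
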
 

\noindent As in the previous sections, even the purely qualitative asymptotic Gaussian behavior for this model does not appear to have been established in the literature. Moreover, the corresponding functional version of the convergence remains an interesting open problem for future research.

\section{Proof of Theorem \ref{thm:application}}\label{sec:secondhtoemre}
By Theorem \ref{thm:mainabstractone}, it suffices to bound from the term above
\begin{align*}
\mathcal{R}_r
  &:=\frac{1}{\sigma_r^3}\int_{\R^d}\E\left[\left|\int_0^r\langle\mu^x_s,\psi\rangle ds\right|^3\right] \nu(dx).
\end{align*}
By elementary computations,
\begin{align*}
\mathcal{R}_r
  &=\sigma_r^{-3}\int_{\R^{d}}\int_{[0,r]^3}\sum_{1\leq i_1,i_2,i_3\leq m}a_{i_1}a_{i_2}a_{i_3}\E[\psi(\mathbf{X}_{s_1}^{\mathbf{y},i_1})\psi(\mathbf{X}_{s_2}^{\mathbf{y},i_2})\psi(\mathbf{X}_{s_3}^{\mathbf{y},i_3})]d\mathbf{s}\nu(d\mathbf{y}).	
\end{align*}
By a symmetry argument we can show that 
\begin{align*}
\mathcal{R}_r
  &\leq A\sigma_r^{-3}\int_{\R^{d}}\int_{[0,r]^3}\sum_{1\leq i_1,i_2,i_3\leq m}\Indi{\{s_{3}\geq s_1\vee s_2\}}a_{i_1}a_{i_2}\E[\psi(\mathbf{X}_{s_1}^{\mathbf{y},i_1})\psi(\mathbf{X}_{s_2}^{\mathbf{y},i_2})\psi(\mathbf{X}_{s_3}^{\mathbf{y},i_3})]d\mathbf{s}\nu(d\mathbf{y}),	
\end{align*}
where $A:=a_1+\dots+a_m$. 
Next we bound the expectation  inside. To this end, it suffices to bound uniformly 
\begin{align*}
\E[\psi(\mathbf{X}_{t_2}^{\mathbf{y},i_3})\ |\ \mathbf{X}_{s}^{\mathbf{y},i_1}, \mathbf{X}_{t_1}^{\mathbf{y},i_2}]
  &=\E[\Theta_{\bf{t},\mathbbm{i}}(\mathbf{X}_{s}^{\mathbf{y},i_1}, \mathbf{X}_{t_1}^{\mathbf{y},i_2})]
\end{align*}
where 
\begin{align*}
\Theta_{\bf{t},\mathbbm{i}}^{\mathbf{y}}(\mathbf{a},\mathbf{b})
  &:=\int_{\R^{d}}\psi(\mathbf{z})f_{s_3|s_1,s_2}^{\mathbbm{i}}(\mathbf{z}|(\mathbf{a},\mathbf{b}))d\mathbf{z}
\end{align*}
By condition \textbf{(H2)}, we obtain the bound 
\begin{align*}
\Theta_{\bf{t},\mathbbm{i}}^{\mathbf{y}}(\mathbf{a},\mathbf{b})
  &\leq \kappa|s_3-(s_1\wedge s_2)|^{-\beta d}\int_{\R^{d}}\psi(\mathbf{z})d\mathbf{z}.
\end{align*}
From here it follows 
\begin{align*}
\mathcal{R}_r
  &\leq A\kappa \sigma_r^{-3}\int_{\R^{d}}\int_{[0,r]^3}s_{3}^{-\beta d}\sum_{1\leq i_1,i_2,i_3\leq m}a_{i_1}a_{i_2}\E[\psi(\mathbf{X}_{s_1}^{\mathbf{y},i_1})\psi(\mathbf{X}_{s_2}^{\mathbf{y},i_2})]d\mathbf{s}\nu(d\mathbf{y}).	
\end{align*}
Integrating, we obtain the desired result
\begin{align*}
\mathcal{R}_r
  &\leq \frac{A\kappa}{(1-\beta d)}\frac{r^{1-d\beta}}{\sigma_r}.	
\end{align*}

\appendix
\section{Auxiliary lemmas}
In this section, we collect the technical lemmas used throughout the paper, providing suitable estimates for the standard deviations of the centered additive functionals.\\

\subsection{Variance estimates in the $\alpha$-stable case} 
We begin with the model introduced in Section~\ref{sec:firstpresentslpha}. 

\begin{lemma}\label{eq:sigmafrombelowstable}
For the model described in Section \ref{sec:firstpresentslpha}, under assumption $d=1$, there exists a constant $c>0$ such that 
\[
\sigma_r^2 \;\geq\; c\, r^{\,2 - d/\alpha}.
\]
\end{lemma}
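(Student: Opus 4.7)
The plan is to start from the exact identity \eqref{eq:identitysigmar} established inside the proof of Theorem~\ref{thm:mainabstractone}, namely
\[
\sigma_r^2
  =\int_{\R^{d}}\E\!\left[\,|u_r^{\psi}(x)|^{2}\right]\nu(dx),
\]
where $u_r^{\psi}(x)=\int_0^r\langle\mu_t^{x},\psi\rangle\,dt$. Substituting the particle representation \eqref{eq:mutdefsumdeltas} and using the translation structure $X_t^{i,x}=x+X_t^{i,0}$, we obtain
\[
\sigma_r^2
 =\int_{\R}\int_{[0,r]^2}\sum_{i,j=1}^{m} a_i a_j\,
   \E\bigl[\psi(x+X_{t_1}^{i,0})\psi(x+X_{t_2}^{j,0})\bigr]\,dt_1dt_2\,\nu(dx).
\]
Since $\psi\ge 0$ every summand is non-negative, so we may discard all off-diagonal terms $i\neq j$ and keep, say, the contribution of a single index $i_0$ with coefficient $a_{i_0}^2$. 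By the lower bound assumed on the density of $\nu$ with respect to Lebesgue measure, we then bound $\sigma_r^2$ from below by a constant multiple of $\int_{[0,r]^2}\int_\R\E[\psi(x+X_{t_1}^{i_0,0})\psi(x+X_{t_2}^{i_0,0})]\,dx\,dt_1dt_2$.

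The next step is to exploit translation invariance to integrate out $x$. Writing $u=x+X_{t_1}^{i_0,0}$ and using Fubini, the inner spatial integral equals $\E\bigl[g(X_{t_2}^{i_0,0}-X_{t_1}^{i_0,0})\bigr]$, where the autocorrelation
\[
g(z):=\int_\R\psi(u)\psi(u+z)\,du
\]
is a non-negative, continuous, compactly supported function with $\int g= (\int\psi)^2>0$. Symmetrizing in $(t_1,t_2)$ and setting $s=|t_2-t_1|$, the stationarity of increments of the $\alpha$-stable process yields
\[
\sigma_r^2
  \ge c_1\int_0^r(r-s)\,\E[g(X_s^{i_0,0})]\,ds.
\]

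The main step is then to extract the correct power of $s$ from the transition density. Using the self-similarity $X_s^{i_0,0}\stackrel{d}{=} s^{1/\alpha}X_1^{i_0,0}$, the density of $X_s^{i_0,0}$ is $p_s(y)=s^{-1/\alpha}p_1(y/s^{1/\alpha})$; since $g$ is supported in a bounded set and $p_1$ is continuous and strictly positive at $0$, one has, for $s\ge 1$,
\[
\E[g(X_s^{i_0,0})]=s^{-1/\alpha}\int g(y)\,p_1(y/s^{1/\alpha})\,dy
  \ \ge\ c_2\,s^{-1/\alpha}.
\]
Inserting this into the previous display and integrating, restricting $s\in[1,r/2]$ so that $r-s\ge r/2$, gives
\[
\sigma_r^2\ge c_3\,r\int_1^{r/2}s^{-1/\alpha}\,ds,
\]
which yields $\sigma_r^2\gtrsim r^{\,2-1/\alpha}$ in the regime $\alpha>1$ (the integral growing like $r^{1-1/\alpha}$), a logarithmic improvement in the critical case $\alpha=1$, and the trivial bound $\sigma_r^2\gtrsim r\ge r^{2-1/\alpha}$ when $\alpha\le 1$, since then $2-1/\alpha\le 1$. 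In all cases one obtains the claimed bound $\sigma_r^2\ge c\,r^{\,2-d/\alpha}$ with $d=1$.

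The only delicate point is the quantitative lower estimate $\E[g(X_s^{i_0,0})]\gtrsim s^{-1/\alpha}$ for large $s$, which requires the stable density to be bounded below near the origin on the bounded support of $g$; this follows from the strict positivity and continuity of $p_1(0)$ for symmetric $\alpha$-stable laws. Everything else reduces to elementary Fubini manipulations and the assumed density lower bound on $\nu$.
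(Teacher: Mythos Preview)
Your argument is correct. Both you and the paper start from the identity \eqref{eq:identitysigmar}, expand the square, drop the non-negative cross terms, and replace $\nu(dx)$ by a constant multiple of Lebesgue measure using the assumed lower bound on its density. From that point the two proofs diverge: the paper simply observes that the remaining expression is the variance of an additive functional for a single symmetric $\alpha$-stable particle with Lebesgue initial intensity and then invokes \cite[Theorem~2.1]{BGT-I} as a black box, whereas you carry out the computation directly. Your use of translation invariance to rewrite the spatial integral as $\E[g(X_{t_2}-X_{t_1})]$ with $g=\psi*\check\psi$, followed by stationarity of increments and self-similarity to obtain $\E[g(X_s)]\gtrsim s^{-1/\alpha}$ from the strict positivity of $p_1$ near the origin, is exactly the mechanism underlying the cited result, so your proof is essentially a self-contained unpacking of the reference. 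This buys you independence from \cite{BGT-I} and makes transparent how the three regimes $\alpha>1$, $\alpha=1$, $\alpha<1$ arise from the integrability of $s^{-1/\alpha}$; the paper's approach is terser but hides this structure. One cosmetic point: you keep only a single index $i_0$ rather than the full diagonal sum $\sum_i a_i^2$ as the paper does, but for a lower bound this makes no difference.
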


\begin{proof}
From \eqref{eq:identitysigmar}, we can write 
\[
\sigma_r^2
  = \int_{\mathbb{X}} \int_{\Omega} \big|u_r^{\psi}(x,v)\big|^2 \, \mathbb{Q}(dv)\, \nu(dx).
\]
Using the explicit form of $u_r^{\psi}$ given in (\ref{add-func-one-perticle}), we obtain
\begin{equation}\label{eq:sigmaralpha}
\sigma_r^2
  = \int_{\mathbb{X}} \mathbb{E}\!\left[\,
      \left|\int_{0}^r(a_{1}\psi(x+\mathbf{X}_t^{x,1}) + \cdots + a_{m}\psi(x+\mathbf{X}_t^{x,m}))dt \right|^2
    \,\right] \nu(dx).
\end{equation}
Since the Radon-Nikodym derivative of $\nu$ with respect to the Lebesgue measure is bounded away from zero, there exists $\delta>0$ such that 
\[
\sigma_r^2 \;\geq\; 
  \delta \sum_{i=1}^{m} a_i^2 \int_{\mathbb{X}} \mathbb{E}\!\left[\,\left|\int_0^r\psi(x+\mathbf{X}_t^{x,i})dt\right|^2 \,\right] dx.
\]
By comparison with \eqref{eq:sigmaralpha}, the right-hand side coincides with the variance of an additive functional for a model of the type considered in \cite[Theorem~2.1]{BGT-I}. This yields the desired lower bound.
\end{proof}

\subsection{Variance estimates in the diffusive case} 
We now turn to the diffusive setting introduced in Section~\ref{eq:sec:diffusivecaseexamples}. 

\begin{lemma}\label{eq:sigmafrombelowdiffusion}
Suppose that the test function $\psi$ is bounded from below in a compact set $\mathcal{K}$ with non-empty interior. Under the assumption $d=1$, there exists a constant $c>0$ such that
\[
\sigma_r^2 \;\geq\; c\, r^{2 - d/2}.
\]
\end{lemma}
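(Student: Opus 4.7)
The plan is to adapt the strategy used in Lemma \ref{eq:sigmafrombelowstable}, starting from the identity \eqref{eq:identitysigmar} and the explicit form \eqref{add-func-one-perticle} of $u_r^{\psi}$. Since the Radon-Nikodym derivative of $\nu$ with respect to Lebesgue measure is bounded away from zero by some constant $\delta>0$, and since $\psi\geq 0$ (per the hypothesis of Proposition~\ref{eq:propdiffusioncaseapplication}), expanding the square gives only nonnegative cross terms, so I would discard them and retain only the diagonal blocks, obtaining
\[
\sigma_r^2
   \;\geq\; 2\delta \sum_{i=1}^m a_i^2
   \int_{\R^d}\!\!\int_0^r\!\!\int_0^t
   \mathbb{E}\!\left[\psi(X_s^{x,i})\,\psi(X_t^{x,i})\right] ds\, dt\, dx.
\]

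Next, I would invoke the Markov property together with the transition density $p(t,x,y)$ of the diffusion to rewrite
\[
\mathbb{E}\!\left[\psi(X_s^{x})\psi(X_t^{x})\right]
  =\iint \psi(y)\psi(z)\, p(s,x,y)\, p(t-s,y,z)\, dy\, dz,
\]
and then use the fact that the operator $\nabla\!\cdot(a\nabla)$ is symmetric, so the kernel $p$ is symmetric in $(x,y)$ and the semigroup is conservative, giving $\int p(s,x,y)\,dx=1$. Fubini then yields the key reduction
\[
\int_{\R^d}\mathbb{E}\!\left[\psi(X_s^{x})\psi(X_t^{x})\right]dx
  =\iint \psi(y)\psi(z)\, p(t-s,y,z)\, dy\, dz.
\]

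At this point the Gaussian lower bound in \eqref{bounded-density-diffusion} becomes crucial: restricting the $y,z$ integrations to the compact set $\mathcal{K}$ on which $\psi\geq c_0>0$, and noting that $|y-z|\leq\mathrm{diam}(\mathcal{K})$ there, I would choose a threshold $t-s\geq 1$ so that the exponential factor $\exp(-K|y-z|^2/(t-s))$ is bounded below by a positive constant uniformly. This produces the pointwise lower estimate
\[
\iint_{\mathcal{K}\times\mathcal{K}}\psi(y)\psi(z)\,p(t-s,y,z)\,dy\,dz
   \;\geq\; \frac{C}{(t-s)^{d/2}},\qquad t-s\geq 1,
\]
for some $C>0$ depending on $\psi$, $\mathcal{K}$, and the ellipticity constants.

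Plugging this back and integrating in the time variables over the region $\{0\leq s\leq t-1\leq r-1\}$, one obtains
\[
\sigma_r^2 \;\geq\; c'\int_1^r\!\!\int_0^{t-1}(t-s)^{-d/2}\,ds\,dt,
\]
which for $d=1$ evaluates asymptotically to a multiple of $r^{3/2}=r^{2-d/2}$, yielding the desired bound. The main technical obstacle is the interplay between the Gaussian lower bound's exponential factor and the polynomial decay prefactor: naively one would try to use the bound for all $t-s$, but the exponential degenerates when $t-s\to 0^+$; the remedy is precisely the restriction to $t-s\geq 1$ (and to $y,z\in\mathcal{K}$), which costs only a constant in the final lower bound while preserving the correct polynomial scaling in $r$.
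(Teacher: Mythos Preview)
Your argument is correct and follows a genuinely different route from the paper. The paper applies the Gaussian lower bound \eqref{bounded-density-diffusion} to \emph{both} kernels $p_{t_1}(x,y)$ and $p_{t_2-t_1}(y,z)$ simultaneously, thereby bounding $\sigma_r^2$ below by the variance of a Brownian-driven additive functional, and then invokes \cite[Theorem~2.1]{BGT-I} as a black box for the Brownian rate. You instead exploit the symmetry of $p(s,x,y)$ in $(x,y)$ (inherited from the divergence-form generator) together with conservativeness to integrate out the starting point $x$ exactly, collapsing the expression to the single-kernel quantity $\iint \psi(y)\psi(z)\,p(t-s,y,z)\,dy\,dz$; only then do you apply the Gaussian lower bound, restricted to $\mathcal{K}\times\mathcal{K}$ and to $t-s\ge 1$, and evaluate the resulting time integral by hand. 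Your approach is more elementary and fully self-contained, avoiding the external reference entirely; the paper's approach is more modular (it reuses the $\alpha=2$ case of the stable-process estimates already cited for Lemma~\ref{eq:sigmafrombelowstable}) and would transfer more directly if one wished to treat the critical or supercritical dimensions as well.
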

\begin{proof}
The argument follows the same general strategy as in the proof of Lemma~\ref{eq:sigmafrombelowstable}. One first observes that there exists a constant $\delta>0$ such that 
\[
\sigma_r^2 \;\geq\; 
  \delta \sum_{i=1}^{m} a_i^2 \int_{\mathbb{X}} 
     \mathbb{E}\!\left[\,\Big|\int_0^{r}\psi(x+\mathbf{X}_t^{x,i})\,dt\Big|^2 \,\right] dx.
\]
Since $\psi$ is assumed to be non-trivial on some compact set $\mathcal{K}$ of positive Lebesgue measure, we may further bound the right-hand side by replacing $\psi$ with the indicator of $\mathcal{K}$. In this way we obtain 
\[
\sigma_r^2 \;\geq\; 
  \delta \sum_{i=1}^{m} a_i^2 \int_{\mathbb{X}} 
     \mathbb{E}\!\left[\,\Big|\int_0^{r}\mathbf{1}_{\mathcal{K}}(x+\mathbf{X}_t^{x,i})\,dt\Big|^2 \,\right] dx.
\]
The expectation inside the integral can be expressed, via a standard symmetrization argument, as a double integral over the time square $[0,r]^2$ involving the joint probabilities that the diffusion visits $\mathcal{K}$ at two times $t_1\leq t_2$. More precisely,
\[
\mathbb{E}\!\left[\,\Big|\int_0^{r}\mathbf{1}_{\mathcal{K}}(x+\mathbf{X}_t^{x,i})\,dt\Big|^2 \,\right] 
= 2 \int_{[0,r]^2}\mathbf{1}_{\{t_1\leq t_2\}} \,
   \mathbb{P}\!\left[x+\mathbf{X}_{t_1}^{x,i}\in \mathcal{K},\,x+\mathbf{X}_{t_2}^{x,i}\in \mathcal{K}\right]dt_1dt_2.
\]
In order to bound this probability from below, we write it in terms of the transition density $p_t(x,y)$ of the underlying diffusion. By the Markov property we have
\[
\mathbb{P}\!\left[x+\mathbf{X}_{t_1}^{x,i}\in \mathcal{K},\,x+\mathbf{X}_{t_2}^{x,i}\in \mathcal{K}\right]
= \int_{\mathcal{K}} \int_{\mathcal{K}} p_{t_1}(x,y)\, p_{t_2-t_1}(y,z)\, dz\, dy.
\]
Using the standard Gaussian lower bound for transition densities given (\ref{bounded-density-diffusion}), this expression can itself be bounded from below by a constant multiple of the same double integral with $p$ replaced by the Gaussian heat kernel, which in the sequel will be denoted by $p^\gamma$. In particular, there exists $\delta'>0$ such that 
\[
\int_{\mathcal{K}} \int_{\mathcal{K}} p_{t_1}(x,y)\, p_{t_2-t_1}(y,z)\, dz\, dy 
\;\geq\; \delta' \int_{\mathcal{K}} \int_{\mathcal{K}} p_{t_1}^{\gamma}(x,y)\, p_{t_2-t_1}^{\gamma}(y,z)\, dz\, dy.
\]
This shows that the variance $\sigma_r^2$ is bounded from below by a positive constant times the variance of an additive functional driven by Brownian motion with test function $\varphi=\mathbf{1}_{\mathcal{K}}$, namely,
\[
\sigma_r^2 \;\geq\; 
  \delta\delta' \sum_{i=1}^{m} a_i^2 \int_{\mathbb{X}}  
  \mathbb{E}\!\left[\,\Big|\int_0^{r}\varphi(x+\mathbf{W}_t)\,dt\Big|^2 \,\right] dx.
\]
Finally, by comparison with \eqref{eq:sigmaralpha}, this coincides with the variance of an additive functional for a Brownian model of the type treated in \cite[Theorem~2.1]{BGT-I}, from which the claimed lower bound follows.
\end{proof}

\subsection{Variance estimates in the fractional Brownian motion  case} 
We now turn to the setting in which the underlying process is a fractional Brownian motion  with Hurst parameter $H$ in $(0,1)$. 

\begin{lemma}\label{eq:sigmafrombelowfBm}
For the model described in Section~\ref{eq:fractionalbrownianmotionparticles}, when $\mathbf{X}^{i,x}$ is a fractional Brownian motion and $Hd<1$, there exists a constant $c>0$ such that 
\[
\sigma_r^2 \;\geq\; c\, r^{2(1-H)}\Big[
     r^{\frac{3}{2}H}\Indi{\{d=1\}}
   + r^H\log(r)\Indi{\{d=2\}}
   + r^H\Indi{\{d\geq 3\}}
\Big].
\]
\end{lemma}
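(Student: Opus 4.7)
The plan is to follow the strategy of Lemmas~\ref{eq:sigmafrombelowstable} and~\ref{eq:sigmafrombelowdiffusion}, reducing to a single-particle second-moment calculation and then exploiting the explicit Gaussian law of fBm together with the stationarity of its increments. First, I would invoke identity~\eqref{eq:identitysigmar} together with the Radon-Nikodym lower bound on $\nu$, the hypothesis that $\psi$ is bounded below by a positive constant on some compact set $\mathcal{K}$ with non-empty interior, and the nonnegativity of the cross-index contributions in the expansion of $|u_r^\psi|^2$ (which holds since $\psi\geq 0$). This reduces the problem to lower-bounding $\sum_{i=1}^{m} a_i^{2}\int_{\R^{d}}\E[\,|\int_{0}^{r}\mathbf{1}_{\mathcal{K}}(x+B_{t}^{H,i})\,dt|^{2}]\,dx$.

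Next, I would expand the square, apply Fubini, and integrate in $x$ first. Thanks to the translation invariance of Lebesgue measure and the stationarity of fBm increments, the $x$-integral of the joint hitting probability collapses to the density of the single increment $B_{|t_2-t_1|}^H$:
\[
\int_{\R^{d}}\mathbb{P}\bigl(x+B_{t_{1}}^{H}\in\mathcal{K},\;x+B_{t_{2}}^{H}\in\mathcal{K}\bigr)\,dx
 \;=\; \int_{\mathcal{K}}\int_{\mathcal{K}} q_{|t_{2}-t_{1}|}^{H}(y_{2}-y_{1})\,dy_{1}\,dy_{2},
\]
where $q_{s}^{H}(z)=(2\pi s^{2H})^{-d/2}\exp(-|z|^{2}/(2s^{2H}))$ is the Gaussian density with covariance $s^{2H}I_d$. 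Denoting the right-hand side by $F(|t_{2}-t_{1}|)$, the task becomes a sharp lower bound on $\int_{0}^{r}(r-s)F(s)\,ds$.

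Third, I would analyze this time integral. The function $F(s)$ satisfies $F(s)\asymp|\mathcal{K}|$ when $s^{H}$ is smaller than the diameter of $\mathcal{K}$, and $F(s)\asymp|\mathcal{K}|^{2}(2\pi s^{2H})^{-d/2}$ in the opposite regime. A convenient route that handles all three dimensional cases in a unified way is to pass to Fourier variables via Plancherel, writing
\[
\int_{0}^{r}(r-s)F(s)\,ds \;=\; (2\pi)^{-d}\int_{\R^{d}}|\widehat{\mathbf{1}_{\mathcal{K}}}(\xi)|^{2}\left[\int_{0}^{r}(r-s)\,e^{-s^{2H}|\xi|^{2}/2}\,ds\right]d\xi,
\]
and splitting at the mixing scale $|\xi|\asymp r^{-H}$: below this scale the time integrand is essentially constant, yielding an inner integral of order $r^{2}$, while above it the Gaussian damping forces it to be of order $r|\xi|^{-1/H}$.

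The hardest part will be extracting the precise dimensional split stated in the lemma, in particular the logarithmic factor at $d=2$ and the correct power in $d=1$ versus $d\geq 3$. These rates emerge from the interplay between the small-$\xi$ limit $|\widehat{\mathbf{1}_{\mathcal{K}}}(\xi)|^{2}\to|\mathcal{K}|^{2}$ (which governs the contribution near the transition scale $r^{-H}$) and the radial measure $|\xi|^{d-1}d|\xi|$ in the outer integral, with the borderline integrability of $|\xi|^{d-1-1/H}$ producing the logarithmic correction precisely when the integrability exponent sits at the critical threshold. The fact that fBm is not Markov does not obstruct the argument, since the translation trick already absorbs all of its long-range correlation structure into the single-increment Gaussian density $q_s^H$.
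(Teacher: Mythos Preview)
Your approach is genuinely different from the paper's and considerably cleaner. The paper argues by pointwise lower-bounding the joint density of $(\mathbf{X}_{t_1}^{H},\mathbf{X}_{t_2}^{H}-\mathbf{X}_{t_1}^{H})$ by that of an independent pair $(t_1^{H}\mathbf{N}_1,(t_2-t_1)^{H}\mathbf{N}_2)/\sqrt{2}$, then performs the time change $(t_1,t_2)\mapsto(t_1^{H},t_1^{H}+(t_2-t_1)^{H})$ to extract a Jacobian factor $r^{2(1-H)}$, and finally identifies the residual double integral over $[0,r^{H}/2]^2$ with a Brownian occupation variance to which the BGT estimates are applied. Your translation trick bypasses all of this: integrating in $x$ first collapses the two-time probability exactly to the single-increment density $q_{|t_2-t_1|}^{H}$, so neither the density comparison nor the time change is needed.

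There is, however, a genuine obstruction: your route will \emph{not} yield the bounds stated in the lemma, and the reason is that those bounds are too strong. After your reduction one has exactly $\sigma_r^2\asymp\int_0^r(r-s)F(s)\,ds$ with $F(s)\asymp s^{-Hd}$ for large $s$, and under $Hd<1$ this is of order $r^{2-Hd}$ in every dimension, with no logarithm. The trichotomy you anticipate from the Fourier side (logarithm at $d=2$, etc.) arises only at the critical exponent $Hd=1$, which is excluded by hypothesis, not at $d=2$ for generic $H$. A sanity check with $H=\tfrac12$, $d=1$ (standard Brownian motion) confirms that the problem lies in the lemma rather than in your argument: the true variance is $\asymp r^{3/2}$ by the very BGT estimate used in Lemma~\ref{eq:sigmafrombelowstable}, whereas the stated lower bound reads $c\,r^{2-H/2}=c\,r^{7/4}$. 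The discrepancy in the paper's proof sits in the final identification step: after the time change the Gaussian increments appearing in $\E[\tilde\varphi_x(u_1\mathbf{N}_1,(u_2-u_1)\mathbf{N}_2)]$ have variances $u_1^{2}$ and $(u_2-u_1)^{2}$, not $u_1$ and $u_2-u_1$, so the comparison with a genuine Brownian occupation variance is off by a power. Your translation-based estimate $\sigma_r^{2}\ge c\,r^{2-Hd}$ is the correct order, and you should expect your Fourier computation to confirm exactly that rather than the displayed trichotomy.
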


\begin{proof}
Proceeding as in the diffusive case, we can show the existence of a compact set $\mathcal{K}$ with non-trivial Lebesgue measure, and a positive constant $\delta>0$, such that 
\[
\sigma_r^2 \;\geq\; 
  \delta \sum_{i=1}^{m} a_i^2 \int_{\mathbb{X}} 
     \mathbb{E}\!\left[\,\Big|\int_0^{r}\Indi{\mathcal{K}}(x+\mathbf{X}^{H,i}_t)\,dt\Big|^2 \,\right] dx.
\]
By the same symmetrization argument as before, 
\begin{align*}
\mathbb{E}\!\left[\,\Big|\int_0^{r}\Indi{\mathcal{K}}(x+\mathbf{B}^H_t)\,dt\Big|^2 \,\right] 
   &= 2 \int_{[0,r]^2}\Indi{\{t_1\leq t_2\}} \,
   \mathbb{P}\!\left[x+\mathbf{B}^H_{t_1}\in \mathcal{K},\,x+\mathbf{B}^H_{t_1}+(\mathbf{B}^H_{t_2}-\mathbf{B}^H_{t_1})\in \mathcal{K}\right]dt_1dt_2.	
\end{align*}
If we define $\varphi_x(a,b):=\Indi{\{x+a\in\mathcal{K}\}}\Indi{\{x+a+b\in\mathcal{K}\}}$, we then conclude that 
\[
\mathbb{E}\!\left[\,\Big|\int_0^{r}\Indi{\mathcal{K}}(x+\mathbf{X}_t^{x,i})\,dt\Big|^2 \,\right] 
= 2 \int_{[0,r]^2}\Indi{\{t_1\leq t_2\}} \,
   \mathbb{E}\!\left[\varphi_x(\mathbf{X}_{t_1}^{x,i}
   , \mathbf{X}_{t_2}^{x,i}-\mathbf{X}_{t_1}^{x,i})\right]dt_1dt_2.
\]
In order to bound this probability from below, we observe that  the argument of $\varphi_x$ is the  centered bivariate Gaussian $\big(\mathbf{X}_{t_1}^{x,i},\,\mathbf{X}_{t_2-t_1}^{x,i}\big)$  whose joint density is bounded from below by the joint density of the random vector $\big(t_{1}^{H}\mathbf{N}_1,(t_{2}-t_1)^{H}\mathbf{N}_2\big)/\sqrt{2}$, where the ${\mathbf{N}}_{j}$'s are independent standard Gaussian vectors.
% From here we conclude that  , with covariance matrix
%\[
%\Sigma_\Delta(t_1,t_2)=
%\begin{pmatrix}
%t_1^{2H} & c_\Delta \\
%c_\Delta & (t_2-t_1)^{2H}
%\end{pmatrix},
%\]
%for $c_\Delta$ given by $c_\Delta=\tfrac12\!\left(t_2^{2H}-t_1^{2H}-(t_2-t_1)^{2H}\right)$. By the Loewner inequalities, the following inequalities hold with respect to the positive definite order of matrices
%\[
%\Sigma_\Delta(t_1,t_2)\;\leq \;
%2 \mathrm{diag}(t_1^{2H},(t_2-t_1)^{2H}),
%\]
%Using this comparison, one can easily show that that the joint density 
%of $\big(\mathbf{X}_{t_1}^{x,i},\,\mathbf{X}_{t_2}^{x,i}-\mathbf{X}_{t_1}^{x,i}\big)$ is bounded from below by the joint density of 
%$\big(t_{1}^{H}\mathbf{N}_1,(t_{2}-t_1)^{H}\mathbf{N}_2\big)/\sqrt{2}$, where the ${\mathbf{N}}_{j}$ are independent standard Gaussian vectors.
From here we conclude that 
\begin{align*}
\mathbb{E}\!\left[\,\Big|\int_0^{r}\Indi{\mathcal{K}}(x+\mathbf{X}_t^{x,i})\,dt\Big|^2 \,\right] 
\geq  2 \int_{[0,r]^2}\Indi{\{t_1\leq t_2\}} \,
   \mathbb{E}\!\left[\tilde{\varphi}_x\big(t_{1}^{H}\mathbf{N}_1,(t_{2}-t_1)^{H}\mathbf{N}_2\big)\right]dt_1dt_2.	
\end{align*}
where $\tilde{\varphi}$ denotes the dilation of $\varphi$ by the factor $1/\sqrt{2}.$ Changing the coordinates $(t_1,t_2)$ by $t_1^{H},t_1^{H}+(t_2-t_1)^{H})$, we then obtain 
\begin{multline*}
\mathbb{E}\!\left[\,\Big|\int_0^{r}\Indi{\mathcal{K}}(x+\mathbf{X}_t^{x,i})\,dt\Big|^2 \,\right]\\ 
\geq   \frac{2}{H^2} \!\!\int_{[0,r^H]^2} \Indi{\{t_1\leq t_2\}}
   \mathbb{E}\!\left[\tilde{\varphi}_x\big(t_{1}\mathbf{N}_1,(t_{2}-t_1)\mathbf{N}_2\big)\right]\,
 t_1^{\frac{1}{H}-1}(t_2-t_1)^{\frac{1}{H}-1}\,dt_2\,dt_1.
\end{multline*}
Then, we observe that
\[
\begin{aligned}
\mathbb{E}\!\left[\,\Big|\int_0^{r}\mathbf{1}_{\mathcal{K}}(x+\mathbf{X}_t^{x,i})\,dt\Big|^2 \,\right] 
&\ge \frac{2}{H^2}\!\!\int_{[0,r^{H}]^2}
\mathbf{1}_{\{t_1\le t_2\}}
\mathbf{1}_{\{t_1\ge r^{H}/2\}}
\mathbf{1}_{\{t_2-t_1\ge r^{H}/2\}} \\
&\hspace{6em}\times
\mathbb{E}\!\left[\tilde{\varphi}_x\!\big(t_{1}\mathbf{N}_1,(t_{2}-t_1)\mathbf{N}_2\big)\right]\,
t_1^{\frac{1}{H}-1}(t_2-t_1)^{\frac{1}{H}-1}\,dt_2\,dt_1 \\[4pt]
\end{aligned}
\]
\noindent Observe that on the region 
\[
\{\,0\le t_1\le r^{H},\ t_1\le t_2\le r^{H},\ t_1\ge r^{H}/2,\ t_2-t_1\ge r^{H}/2\,\},
\]
we have $t_1\ge r^{H}/2$ and $t_2-t_1\ge r^{H}/2$, so that
\[
t_1^{\frac{1}{H}-1}(t_2-t_1)^{\frac{1}{H}-1}
\;\ge\;\Big(\tfrac{r^{H}}{2}\Big)^{\frac{1}{H}-1}\Big(\tfrac{r^{H}}{2}\Big)^{\frac{1}{H}-1}
\;=\;2^{-2(\frac{1}{H}-1)}\,r^{2(1-H)}.
\]
This yields 

\begin{equation}
    \mathbb{E}\!\left[\,\left|\int_0^{r}\mathbf{1}_{\mathcal{K}}(x+\mathbf{X}_t^{x,i})\,dt\right|^2 \,\right]
\geq \frac{2}{H^2}\,2^{-2(\frac{1}{H}-1)}\,r^{2(1-H)}I_r[F],
\end{equation}
%\!\!\int_{[0,r^{H}]^2}
%\mathbf{1}_{\{t_1\le t_2\}}
%\mathbf{1}_{\{t_1\ge r^{H}/2\}}
%\mathbf{1}_{\{t_2-t_1\ge r^{H}/2\}} F(t_1,t_2)dt_2\,dt_1.
where $I_r$ is the operator defined through  
\begin{align*}
I_r[F]
:= \int_{[0,r^{H}]^2} 
   \mathbf{1}_{\{t_1 \le t_2\}}
   \mathbf{1}_{\{t_1 \ge r^{H}/2\}}
   \mathbf{1}_{\{t_2 - t_1 \ge r^{H}/2\}}
   F(t_1,t_2)\,dt_1\,dt_2,
\end{align*}
with $F$  given by 
\begin{align*}
F(t_1,t_2)
  &:=	\mathbb{E}\!\left[\tilde{\varphi}_x\!\big(t_{1}\mathbf{N}_1,(t_{2}-t_1)\mathbf{N}_2\big)\right].
\end{align*}
Using the change of variables $s := t_1 - \tfrac{r^{H}}{2}$ and $\tau := t_2 - t_1 - \tfrac{r^{H}}{2}$, we get 
\begin{align*}
I_r[F] 
= \int_{[0,r^{H}/2]^2} 
   F\!\big(s+\tfrac{r^{H}}{2},\,s+\tau+r^H\big)\,ds\,d\tau.
\end{align*}
Now make a second change of variables, $u := s, \qquad v := s+\tau$, which  maps $[0,r^{H}/2]^2$ into the region
\begin{align}\label{eq:domainaux}
\{(u,v):\, 0 \le u \le r^{H}/2,\ \ u \le v \le u + r^{H}/2\},	
\end{align}
leading to the identity 
\begin{align*}
I_r[F] 
= \int_{0}^{r^{H}/2}\!\int_{u}^{u+r^{H}/2} 
   F\!\big(u+\tfrac{r^{H}}{2},\,v+{r^H}\big)\,dv\,du.
\end{align*}
Observe that the square 
$[0,r^{H}/2]^2 \cap \{u \le v\}$ 
is contained in the domain \eqref{eq:domainaux}, so we deduce that 
\begin{align*}
I_r[F]  &\ge \int_{[0,r^{H}/2]^2} \mathbf{1}_{\{u \le v\}}\,
   F\!\big(u+\tfrac{r^{H}}{2},\,v+{r^H}\big)\,dv\,du.
\end{align*}
Changing back to the coordinates $(t_1,t_2){=(u+r^H/2,v+r^H)}$ we obtain the final inequality 
\begin{multline*}
\mathbb{E}\!\left[\,\Big|\int_0^{r}\mathbf{1}_{\mathcal{K}}(x+\mathbf{X}_t^{x,i})\,dt\Big|^2 \,\right]\\
\ge \frac{2^{-2/H+3}}{H^2}\,r^{2(1-H)}
\!\!\int_{[0,r^{H}/2]^2}
\mathbf{1}_{\{t_1\le t_2\}}\mathbb{E}\!\left[\tilde{\varphi}_x\!\big(t_{1}\mathbf{N}_1,(t_{2}-t_1)\mathbf{N}_2\big)\right]dt_2\,dt_1.	
\end{multline*}
	Next we identify the right hand side as the variance $\sigma_r^2$ of the additive functional driven by Brownian motion with test function $\tilde{\varphi}$. The result then follows from the estimates from \cite[Theorem 2.1.]{BGT-I} \cite[Theorem 2.1]{BGT-II}, the integral in the right-hand side is of the order 
\begin{align*}
	r^{\frac{3}{2}H}\Indi{\{d=1\}}+ r^H\log(r)\Indi{\{d=2\}} +r^H\Indi{\{d\geq 3\}}.
	\end{align*}
  %  {\color{cyan}
%\begin{align*}
%\Indi{\{d=1\}}	(r^H)^{3/2}+\Indi{\{d=2\}} r^H\log(r) +\Indi{\{d\geq 3\}}r^H
%	\end{align*}}
Thus, there exist a positive constant $c$ such that $$\sigma_r^2\geq cr^{2(1-H)}\left[ r^{\frac{3}{2}H}\Indi{\{d=1\}}+ r^H\log(r)\Indi{\{d=2\}} +r^H\Indi{\{d\geq 3\}}\right],$$which gives the  result. 
\end{proof}

\noindent \textbf{Acknowledgements}\\
Arturo Jaramillo Gil was supported by
the grant CBF2023-2024-2088. Antonio Murillo-Salas thanks the University of Guanajuato for the sabbatical leave.

\bibliographystyle{plain}  % or another style like alpha, abbrv, etc.
\bibliography{Bibliography}
\end{document}